\documentclass[reqno,11pt]{amsart}
\usepackage[a4paper,bindingoffset=0.5cm,left=2cm,right=2cm,top=2.5cm,bottom=2cm,footskip=.8cm]{geometry}
\usepackage{comment}
\usepackage{mathrsfs}
\usepackage{subcaption} 
\usepackage{tikz}
\usepackage{graphicx} 
\usepackage{amsmath, amssymb, amsthm, amsfonts,amsthm} 
\usepackage[ruled,vlined]{algorithm2e}
\usepackage{hyperref}
\hypersetup{colorlinks=true, linkcolor=blue, citecolor=blue,  filecolor=blue, urlcolor=blue}

\newtheorem{theorem}{Theorem}[section]

\newtheorem{lemma}[theorem]{Lemma}
\newtheorem{remark}[theorem]{Remark}

\numberwithin{equation}{section}

\newcommand{\bs}{\boldsymbol}
\newcommand{\E}{\mathbb{E}}
\newcommand{\HH}[2]{^{(#1,#2)}}
\newcommand{\LL}[2]{_{#1,#2}}
\newcommand{\Hh}[1]{^{(#1)}}
\newcommand{\Ll}[1]{_{#1}}
\newcommand{\LI}{\lambda}
\newcommand{\LG}{\nu}
\newcommand{\GI}{\gamma}
\newcommand{\GG}{\varrho}

\title[Parameter inference for partially observed branching processes]{Parameter inference for \\partially observed branching processes}

\author{Simone Baldassarri}
\address{Gran Sasso Science Institute, Viale Francesco Crispi 7, 67100 L’Aquila, Italy.}
\email{simone.baldassarri@gssi.it}

\author{Michel Mandjes}
\address{Mathematical Institute, Leiden University, P.O. Box 9512, 2300 RA Leiden, The Netherlands.}
\email{m.r.h.mandjes@math.leidenuniv.nl}

\author{Jiesen Wang}
\address{Korteweg-de Vries Institute for Mathematics, University of Amsterdam, Science Park 105-107, 1098 XG Amsterdam, The Netherlands.}
\email{jiesenwang@gmail.com}

\begin{document}

\begin{abstract}
 In this paper, we study an age-dependent branching process. In the simplest setting, the population is divided into two age groups, namely juveniles and adults. Our objective is to estimate the model parameters using observations of the total population size only (i.e., juveniles plus adults).
 Focusing on the ergodic regime of the model, we introduce a method-of-moments estimator and establish its asymptotic normality. Several extensions are discussed, including models with more than two age groups.

\medskip
\noindent
\textsc{Keywords.} Age-dependent branching process, partial information, parametric inference, method of moments.

\medskip
\noindent \textsc{Acknowledgments.} This research was supported by the European Union’s Horizon 2020 research and innovation programme under the Marie Skłodowska-Curie grant agreement no. 945045, and by the NWO Gravitation project NETWORKS under grant agreement no. 024.002.003. SB was further supported through “Gruppo Nazionale per l’Analisi Matematica, la Probabilità e le loro Applicazioni” (GNAMPA-INdAM).

\end{abstract}

\maketitle

\section{Introduction}
In this paper we consider a branching process with two age groups \cite{AN}, namely {\it juveniles} $(X_n)_{n\ge 0}$ and {\it adults} $(Y_n)_{n\ge 0}$, where $X_n$ and $Y_n$ denote the population sizes in this first and second age class at generation $n$, respectively. The main objective of this paper is to estimate the underlying parameters of the process. In many practical situations, however, only the {\it aggregate population} size \[Z_n := X_n + Y_n\] is observed.
This presents a non-standard scenario for modeling and inference, as the observed data is aggregated, and the age structure is hidden. 
Observe that the bivariate process $(X_n,Y_n)_{n\ge 0}$ is Markovian, but that the aggregated process $(Z_n)_{n\ge 0}$ is {\it not}, which poses significant challenges for standard inference techniques.

The proposed estimation procedure is based on the {\it method of moments} \cite{CasellaBerger2002, LehmannCasella1998}. Specifically, we derive explicit expressions for moments of the observable sequence $(Z_n)_{n\ge 0}$ as functions of the underlying (unknown) model parameters. These theoretical moments are then equated to their empirical counterparts computed from the data, and the resulting system of equations is solved to obtain estimators for the unknown parameters.
For clarity of exposition, we first present the approach in the elementary setting with two age groups and immigration into one of them. We subsequently indicate how the methodology extends in a natural way to models with an arbitrary number of age groups, with immigration occurring in any subset of these groups, as well as to settings in which the reproduction mechanism is more general.

\medskip

We now proceed by outlining the main results of the paper. Restricting attention to the ergodic regime, we first introduce our estimator and establish that the associated system of moment equations admits a unique solution, thereby ensuring well-definedness of the procedure.
As a next step, we show that the empirical moment vector is asymptotically multivariate normal as the number of observations $N$ tends to infinity. Building on this result, we invoke the delta-method~\cite{van1998} to deduce that the proposed estimator itself is asymptotically multivariate normal. Finally, we derive explicit expressions for the entries of the corresponding covariance matrix, expressed directly in terms of the model primitives.

The proposed procedure is evaluated through a series of numerical experiments designed to assess its finite-sample performance. The results indicate that the estimator performs accurately across a range of parameter settings.

We also discuss several extensions of the basic framework. In particular, we consider observation schemes in which only one of the age groups is observed. Another extension concerns models with more than two age groups, where each adult group may reproduce and immigration may occur in any of the groups. We argue that the estimation methodology extends in a conceptually straightforward manner to this more general setting, although the resulting expressions and computations become substantially more involved.

\medskip

Various techniques for handling unobserved or latent quantities have been developed in the mathematical statistics literature. In the present setting, however, their application is far from straightforward.

Likelihood-based methods are difficult to implement in this setting. Evaluating the likelihood would require either full observation of the age structure or an explicit characterization of the transition dynamics of the non-Markovian process $(Z_n)_{n\ge 0}$, which depend on the unobserved population composition. This naturally leads to formulations in terms of hidden or partially observed Markov processes \cite{cappe2005}, for which likelihood evaluation and optimization are often analytically intractable and computationally intensive, especially when the state space is large or the time series is short. Numerical implementations also typically rely on simulation-based procedures, which can be sensitive to tuning and initialization.
By contrast, the method-of-moments approach adopted here avoids likelihood evaluation altogether. By exploiting explicit relationships between observable moments of $(Z_n)_{n\ge 0}$ and the underlying parameters, it circumvents the need to reconstruct the latent age structure and yields estimators that are computationally transparent, scalable, and well suited to aggregated population data.

Expectation-Maximization (EM)-type algorithms might appear to offer a principled
alternative \cite{Dempster1977, LehmannCasella1998}, given the latent structure of the model. However, their application
here is also problematic. The E-step requires computing conditional expectations
with respect to the smoothing distribution of the latent process $(X_n,Y_n)_{n\ge 0}$ given
only observations of $(Z_n)_{n\ge 0}$. Because the observed process lacks the Markov property, this smoothing distribution does not admit a simple recursive representation. As a result, EM implementations would necessarily rely on
numerical or simulation-based approximations, leading to high computational cost. Moreover, the aggregation inherent in $(Z_n)_{n\ge 0}$ may induce weak identifiability, causing slow convergence or sensitivity to
initialization.

Simulation-based approaches, such as particle filters or approximate Bayesian computation (ABC), face similar difficulties in this setting. These methods require repeated simulation of the latent age-structured process and careful tuning of algorithmic parameters, and their performance can degrade substantially when inference is based solely on aggregated observations. For a detailed discussion of particle filtering and sequential Monte Carlo methods, see e.g.\ \cite{cappe2007,doucet2001}, and for ABC techniques, see e.g.\ \cite{beaumont2002,sisson2018}. These references highlight both the computational cost and the sensitivity of such methods to tuning and initialization, illustrating why direct likelihood-free approaches, such as the method-of-moments, may be preferable when only aggregated data $(Z_n)_{n\ge 0}$ are available.

By contrast, the method-of-moments approach developed here directly exploits
explicit relationships between the observable moments of $(Z_n)_{n\ge 0}$ and the
underlying model parameters. This strategy avoids likelihood evaluation
altogether and bypasses the need to reconstruct the hidden age structure. As a
result, the proposed estimators are computationally simple, scalable, and robust
to the non-Markovian nature of the observed process, making them particularly
attractive in settings where only aggregate population data are available.

In the literature on structured population models it is often suggested that parameters governing stage- or age-structured population dynamics cannot be uniquely identified when only aggregate population counts are observed. The intuition behind this concern is that different combinations of reproduction, maturation, and immigration parameters may generate probabilistically identical trajectories for the total population size, making it difficult to disentangle the underlying mechanisms without observing the individual stages. In practice, statistical analyses of structured population models typically rely on stage-specific observations; see, for instance, the classical treatment in \cite{Caswell2001}. Related inference problems also arise in the study of branching processes, where the statistical analysis generally assumes that the different types or generations can be observed separately; see \cite{GUT}. The results obtained in this paper show that, for the stochastic juvenile-adult branching model with immigration considered here, the key parameters can in fact be uniquely identified from the time series of the total population size alone. In particular, under the Poisson assumptions for immigration and reproduction, the parameters governing immigration, juvenile maturation, and adult reproduction can be consistently estimated from aggregate observations.

\medskip

In a broader sense, the scope of this paper lies within what could be called the area of `statistics for stochastic processes'. Such problems can be viewed as instances of {\it inverse problems}, in which the objective is to infer underlying model parameters from limited, indirect, or aggregated data. In the statistics and applied probability literature, a wide range of inverse problems has been studied. A classical example is the estimation of the offspring distribution in a Galton-Watson branching process from longitudinal population data; see, for instance, the textbook~\cite{GUT}. 
In epidemiology, one seeks to estimate the parameters of a stochastic SIR model based on observed infection counts over time~\cite{CAU}. Related inverse problems arise in finance~\cite{ACL, DUF} and in queueing theory~\cite{GuptaMandjesRavnerWang2025, HAN, RAV}. More recently, attention has turned to the estimation of random graph dynamics from partial observations. For example, \cite{Man_Wan} considers a dynamic Erd\H{o}s-R\'enyi graph in which edges randomly alternate between presence and absence, with inference based on observed subgraph counts, \cite{HMW} extends the analysis to the dynamic Chung-Lu graph, and \cite{BW} addresses the problem for a voter model evolving on a dynamic Erd\H{o}s-R\'enyi graph.

\medskip

The remainder of the paper is organized as follows. In Section~\ref{sec:mod}, we introduce the age-dependent branching model and present the proposed method-of-moments estimator. Section~\ref{sec:norm} is devoted to the proof of its asymptotic normality. In Section~\ref{sec:ext}, we discuss two extensions of the basic framework, namely alternative observation schemes and models with more than two age groups. Finally, Section~\ref{sec:disc} concludes the paper with a brief discussion of the results and several directions for future research.

\section{Model and estimator}\label{sec:mod}
In the first part of this paper, we consider a setting in which the population consists of two types, namely juveniles and adults. Although this simplified structure may not be entirely realistic, it provides a convenient and transparent framework for illustrating the construction of our estimator and for establishing its asymptotic normality. We emphasize, however, that this restriction is not essential: in Section~\ref{sec:ext} we show that the methodology developed here extends naturally to substantially more general and realistic age-dependent branching processes.

In the specific setting considered in this section and in Section~\ref{sec:norm}, reproduction occurs only among adults. Each adult produces a random number of juveniles (often referred to as `offspring'). Juveniles require one time step to mature into adults, provided they survive; that is, offspring face a positive probability of dying before reaching adulthood. Adults live for exactly one time unit. In addition, at each generation there is an immigration component, introducing new juveniles into the population, thus helping to prevent extinction. The usual independence assumptions are imposed. Finally, we assume a condition ensuring that the population does not explode; this condition is identified below.

\subsection{Model description, observation scheme}
Let $X_n$ denote the number of juveniles and $Y_n$ the number of adults in timeslot $n$, respectively.  
The variable $X_n$ is updated based on an exogenous input $I_n$ and a sum of random contributions from $Y_{n-1}$, while $Y_n$ evolves via a binomial thinning of $X_{n-1}$. This means that, formally, the dynamics are given by
\begin{equation} \label{eq:mod_dyn}
X_{n+1} = I_n + \sum_{i=1}^{Y_n} G_{n,i}, \qquad Y_{n+1} \sim \text{Bin}(X_n, p),
\end{equation}
where $\{I_n\}_{n \ge 0}$ is a sequence of i.i.d.\ input variables that are distributed as the generic random variable $I\in{\mathbb N}_0$, and $\{G_{n,i}\}_{n,i \ge 1}$ are i.i.d.\ random variables distributed as the generic random variable $G\in{\mathbb N}_0$, independent of $\{I_n\}_{n \ge 0}$.  The model dynamics are illustrated in Figure \ref{F1}. 

The objective is to estimate $p$ and the parameters associated with $I$ and $G$ from observations. If $I$ and $G$ belong to one-parameter families with unknown parameters $\LI$ and $\LG$, respectively, and $p$ is an additional unknown parameter, then the parameter vector to be estimated is ${\boldsymbol\theta} \equiv (p, \LI, \LG)$.
The main complication is that we do {\it not} observe the individual components $\{X_n\}_{n \ge 0}$ and $\{Y_n\}_{n \ge 0}$, but only the {\it total population}. That is, the observable process is $\{Z_n\}_{n \ge 0}$, where \[Z_n := X_n + Y_n.\] Importantly, while the pair $(X_n, Y_n)$ is Markovian, the process $\{Z_n\}_{n \ge 0}$ alone is {\it not}.

It is noted that, while in the next section we focus on constructing an estimator for ${\boldsymbol\theta}$ in the case where the {\it total} population is observed, the proposed methodology also applies (with a minor modification) when only the {\it juvenile} population $\{X_n\}_{n \ge 0}$ is observed; see the discussion in Section~\ref{sec:ext}. Remarkably, in that same section we also argue that ${\bs\theta}$ cannot be estimated when only the {\it adult} population $\{Y_n\}_{n \ge 0}$ is observed.

\begin{figure}\begin{tikzpicture}[scale=1]

\def\w{1.5}         
\def\h{1.2}         
\def\H{2.2*\h}      
\def\dx{3}          
\def\dy{1.5}        

\def\hgap{0.18}
\def\vgap{0.15}
\def\smallH{0.35}   

\newcommand{\fillboxes}[2]{%
\pgfmathtruncatemacro{\n}{#1}
\pgfmathtruncatemacro{\shade}{#2}
\foreach \k in {1,...,\n} {
    \pgfmathsetmacro{\y}{\vgap + (\k-1)*(\smallH+\vgap)}
    \ifnum\k>\numexpr\n-\shade\relax
        \filldraw[fill=gray!40] (\hgap,\y) rectangle (\w-\hgap,\y+\smallH);
    \else
        \draw (\hgap,\y) rectangle (\w-\hgap,\y+\smallH);
    \fi
}
}

\foreach \i/\lab/\n/\shade in {0/Y_1/2/0,1/X_2/3/0,2/Y_3/2/0,3/X_4/5/1} {
    \begin{scope}[shift={(\i*\dx,0)}]
        \draw (0,0) -- (0,\H);
        \draw (0,0) -- (\w,0);
        \draw (\w,0) -- (\w,\H);
        \node at (\w/2,-0.5) {$\lab$};
        \fillboxes{\n}{\shade}
    \end{scope}
}

\foreach \i/\lab/\n/\shade in {0/X_1/3/1,1/Y_2/1/0,2/X_3/2/2,3/Y_4/1/0} { 
    \begin{scope}[shift={(\i*\dx,\H+\dy)}]
        \draw (0,0) -- (0,\H);
        \draw (0,0) -- (\w,0);
        \draw (\w,0) -- (\w,\H);
        \node at (\w/2,-0.5) {$\lab$};
        \fillboxes{\n}{\shade}
    \end{scope}
}

\newcommand{\drawTopArrow}[4]{%
\pgfmathsetmacro{\startx}{#1*\dx + \w + 0.05}  
\pgfmathsetmacro{\starty}{\H+\dy + \vgap + (#2-1)*(\smallH+\vgap) + \smallH/2}  
\pgfmathsetmacro{\endx}{#3*\dx - 0.05}  
\pgfmathsetmacro{\endy}{\H+\dy + \vgap + (#4-1)*(\smallH+\vgap) + \smallH/2}  
\coordinate (S) at (\startx,\starty);
\coordinate (E) at (\endx,\endy);
\draw[->, thick] (S) -- (E);
}

\drawTopArrow{0}{2}{1}{1} 
\drawTopArrow{2}{1}{3}{1} 

\newcommand{\drawBottomArrow}[4]{%
\pgfmathsetmacro{\startx}{#1*\dx + \w + 0.05}  
\pgfmathsetmacro{\starty}{\vgap + (#2-1)*(\smallH+\vgap) + \smallH/2}  
\pgfmathsetmacro{\endx}{#3*\dx - 0.05}  
\pgfmathsetmacro{\endy}{\vgap + (#4-1)*(\smallH+\vgap) + \smallH/2}  
\coordinate (S) at (\startx,\starty);
\coordinate (E) at (\endx,\endy);
\draw[->, thick] (S) -- (E);
}

\drawBottomArrow{0}{1}{1}{1} 
\drawBottomArrow{0}{2}{1}{2} 
\drawBottomArrow{0}{2}{1}{3} 
\drawBottomArrow{1}{3}{2}{2} 
\drawBottomArrow{1}{1}{2}{1} 
\drawBottomArrow{2}{1}{3}{1} 
\drawBottomArrow{2}{2}{3}{2} 
\drawBottomArrow{2}{2}{3}{3} 
\drawBottomArrow{2}{2}{3}{4} 

\newcommand{\addDagger}[3]{%
\pgfmathsetmacro{\xD}{#1*\dx + \w + 0.05 + 0.1} 
\pgfmathsetmacro{\yD}{#3 + \vgap + (#2-1)*(\smallH+\vgap) + \smallH/2}
\node at (\xD,\yD) {$\dag$};
}

\addDagger{0}{1}{\H+\dy}  
\addDagger{0}{3}{\H+\dy}  
\addDagger{2}{2}{\H+\dy}  

\addDagger{1}{2}{0}       
\addDagger{3}{2}{0}       

\end{tikzpicture}

\caption{\label{F1}Illustration of the evolution of the populations $X_n$ and $Y_n$ for $n=1,\ldots,4$.  
(a) Grey-shaded individuals represent immigrants in each generation, with counts $I_1=1$, $I_2=0$, $I_3=2$, and $I_4=1$.  
(b) In the boxes $X_n$, a dagger ($\dagger$) next to an individual indicates that it `dies', meaning it does not survive to become a member of $Y_{n+1}$. In contrast, arrows connecting individuals in $X_n$ to $Y_{n+1}$ represent juveniles that successfully mature into adults.  
(c) Arrows from $Y_n$ to $X_{n+1}$ depict the offspring produced by members of $Y_n$. For example, $G_{3,2}=3$ indicates that the second member of $Y_3$ produces three offspring in $X_4$; in our sample path, we also have $G_{1,1}=1$, $G_{1,2}=2$, $G_{2,1}=0$, and $G_{3,1}=1$.
Overall, the figure visualizes the generational dynamics, covering survival, reproduction, and immigration events.}
\end{figure}

The process $\{X_n,Y_n\}_{n\ge 0}$ is clearly Markovian. As we will establish below, the process is ergodic under the stability condition
\begin{equation}
\varrho:= p\,\E[G]<1.
\tag{S}\label{S}
\end{equation}

Throughout this paper, we specifically pay attention to the case that the random variables $I$ and $G$ have Poisson distributions:
\begin{equation}
I\sim {\rm Poisson}(\lambda),\quad G\sim {\rm Poisson}(\nu).
\tag{P}\label{P}
\end{equation}

\subsection{Estimator}
We consider the convenient situation that the process $\{X_n,Y_n\}_{n\ge 0}$ is ergodic, and that we start in stationarity. Suppose that the moments we target are
\[ {\boldsymbol m}\equiv (  m_{1},  m_{2},  m_{12}):=\bigl( \E[Z_n],\ \E[Z_n^2],\ \E[Z_{n+1} Z_n] \bigr),\]
which, by our assumptions, do not depend on $n$. 
Given observations $Z_1, \dots, Z_N$ for some $N\ge 2$, the corresponding empirical moments are $\hat{\boldsymbol m}_N\equiv (\hat m_{1,N},\hat m_{2,N},\hat m_{12,N})$, with
\[
\hat m_{1,N} = \frac{1}{N}\sum_{n=1}^N Z_n, 
\qquad 
\hat m_{2,N} = \frac{1}{N}\sum_{n=1}^N Z_n^2, 
\qquad 
\hat m_{12,N} = \frac{1}{N-1}\sum_{n=1}^{N-1} Z_{n+1} Z_n,
\]
respectively.
The method-of-moments estimators $(\hat p_N, \hat\lambda_{I,N}, \hat\lambda_{G,N})$ are then obtained by solving the following three equations with three unknowns:
\[
\bigl( \E[Z_n],\ \E[Z_n^2],\ \E[Z_{n+1} Z_n] \bigr)
=
\bigl( \hat m_{1,N},\ \hat m_{2,N},\ \hat m_{12,N} \bigr),
\]
where it is observed that the left-hand side consists of functions of entries of ${\bs\theta}$.
Below we demonstrate this approach by detailing the three steps: (a)~deriving expressions 
for the moments, (b)~expressing the moments in terms of the unknown parameters ${\bs\theta}$, (c)~solving the moment equations, thus rendering our estimators. When setting up these estimators, we assume that $I$ and $G$ have finite second moments. 
\begin{itemize}
   \item[(a)] 
We derive expressions for $\E[Z_n]$, $\E[Z_n^2]$, and $\E[Z_{n+1} Z_n]$. Using the identity $Z_n = X_n + Y_n$, we obviously have, for any $n$,
\begin{align}
\E[Z_n] &= \E[X_n] + \E[Y_n], \label{eq:EZ} \\[1ex]
\E[Z_n^2] &= \E[X_n^2] + 2\,\E[X_n Y_n] + \E[Y_n^2], \label{eq:EZ2} \\[1ex]
\E[Z_{n+1} Z_n] &= \E[X_{n+1} X_n] + \E[X_{n+1} Y_n] 
+ \E[Y_{n+1} X_n] + \E[Y_{n+1} Y_n]. \label{eq:EZlag}
\end{align}
Using the model dynamics \eqref{eq:mod_dyn} and Wald's equation, this last expression can be rewritten as
\begin{align}
\E[Z_{n+1} Z_n]
&= \E[I] \,\E[X_n] 
+ \E[G] \,\E[X_n Y_n] 
+ \E[I] \,\E[Y_n] 
+ \E[G]\, \E[Y_n^2] \notag \\
&\quad + p\,\E[X_n Y_n] 
+ p\,\E[X_n^2]. \label{eq:EZlag-expanded}
\end{align}
Consequently, it suffices to derive expressions for the five quantities
\[
\E[X_n], \quad \E[Y_n], \quad \E[X_n Y_n], \quad \E[X_n^2], \quad \E[Y_n^2].
\]
\item[(b)]
{Observe} that the only information about $I$ and $G$ that we need in \eqref{eq:EZlag-expanded} is $\E[I]$ and $\E[G]$. 
It is now straightforward to conclude that, again as an application of Wald's equation, $\E[X_n]=\E[I]+\E[G]\,\E[Y_{n-1}]$ and $\E[Y_n]=p\,\E[X_{n-1}]$, confirming the stationarity condition $\varrho:=p\,\E[G]<1$ as given by \eqref{S}.  Thus, for any $n$,
\begin{align}
\E[X_n] &= \frac{\E[I] }{1 - p\,\E[G]  }, 
\qquad 
\E[Y_n] = p\,\E[X_n]=\frac{p\,\E[I]  }{1 - p\,\E[G]  }; \label{X and Y}
\end{align}
these means \eqref{X and Y} follow by using that in stationarity $\E[X_{n}]= \E[X_{n+1}]$ and $\E[Y_{n}]= \E[ Y_{n+1}]$.
In addition, with ${\mathbb F}[G,I]:=2\E[I]\,\E[G]+{\mathbb V}{\rm ar}[G]$, for any $n$,
\begin{align}
\E[X_n Y_n] &=\frac{p\,\E[I]\,\E[X_n]}{1 - p\,\E[G]  }= \frac{p\,\E[I] ^2 }{(1 - p\,\E[G]  )^2}, \label{XY}\\[1ex]
\left(\begin{array}{c}\E[X_{n}^2]\\
\E[Y_{n}^2]\end{array}\right)&=\left(\begin{array}{cc}1&-(\E[G])^2\\-p^2&1\end{array}\right)^{-1}\left(\begin{array}{c}{\mathbb F}[G,I]\,\E[Y_{n}]+\E[I^2]\\p(1-p)\,\E[X_{n}] \\
\end{array}\right) \\
&=\dfrac{1}{1-p^2\mathbb{E}[G]^2}\left(\begin{array}{cc}1&\E[G]^2\\p^2&1\end{array}\right)\left(\begin{array}{c}{\mathbb F}[G,I]\,\E[Y_{n}]+\E[I^2]\\p(1-p)\,\E[X_{n}] \\
\end{array}\right). \label{Y2}
\end{align}
The expression for $\E[X_n Y_n]$, as presented in \eqref{XY}, is found via a straightforward application of the tower property. Indeed, we observe
\begin{align*}
    \E[X_{n+1} Y_{n+1}]&=\E\left[\E\left[\left(I_{n}+\sum_{i=1}^{Y_n}G_{n,i}\right){\rm Bin}(X_n,p)\Big\vert X_n,Y_n\right]\right]\\
    &= \E\Big[\big(\E[I]+Y_n\,\E[G]\big)\,p\,X_n\Big] = p\,\E[I]\,\E[X_n] + p\,\E[G]\,\E[X_nY_n].
\end{align*}
Then, using that in stationarity $\E[X_{n} Y_{n}]= \E[X_{n+1} Y_{n+1}]$, and recalling the expression for $\E[X_n]$, the above expression for $\E[X_n Y_n]$ directly follows. (As an aside, we observe that our computation reveals that $X_n$ and $Y_n$ are uncorrelated; via some additional computations we verified  that they are {actually \it not} independent.)
Along similar lines \eqref{Y2} was proven; the key intermediate step is to verify that  
\[\left(\begin{array}{c}\E[X_{n+1}^2]\\
\E[Y_{n+1}^2]\end{array}\right)=\left(\begin{array}{c}(\E[G])^2\,\E[Y_{n}^2]\\
p^2\E[X_{n}^2]\end{array}\right)+\left(\begin{array}{c}{\mathbb F}[G,I]\,\E[Y_{n}]\\p(1-p)\,\E[X_{n}]
\end{array}\right)+\left(\begin{array}{c}\E[I^2]\\0\end{array}\right).\]

\noindent
For the `Poisson case', i.e., under \eqref{P}, the above expressions can be evaluated somewhat more explicitly, using that $\LI=\E[I]={\mathbb V}{\rm ar}[I]$ and $\LG=\E[G]={\mathbb V}{\rm ar}[G]$,
\begin{align}
\E[X_n] &= \frac{\LI}{1 - p\,\LG}, 
\qquad 
\E[Y_n] = \frac{\LI p}{1 - p\,\LG},\qquad
\E[X_n Y_n] = \frac{\LI^2 p}{(1 - p\,\LG)^2}, \label{momPois}\\[1ex]
\E[X_n^2] &= \frac{\LI}{1 - p\,\LG} 
+ \frac{\LI^2 + \LI p (\LG^2 + 2 \LI \LG) / (1 - p\,\LG)}
{1 - p^2\,\LG^2 }, \label{momPois1}\\[1ex]
\E[Y_n^2] &= \frac{\LI p}{1 - p\,\LG} 
+ \frac{p^2 \bigl(\LI^2 + \LI p (\LG^2 + 2 \LI \LG) / (1 - p\,\LG)\bigr)}
{1 - p^2\,\LG^2 }.\label{momPois2}
\end{align} 

\item[(c)]
Combining these expressions with \eqref{eq:EZ}--\eqref{eq:EZlag-expanded}, we have succeeded in expressing the moment vector ${\boldsymbol m}\equiv (\E[Z_n],\E[Z_n^2],\E[Z_{n+1} Z_n])$ in terms of the parameter ${\boldsymbol\theta}\equiv(p, \LI, \LG).$ We have thus identified a mapping $F:{\mathbb R}^3\to {\mathbb R}^3$ such that ${\boldsymbol m} = F({\boldsymbol\theta})$. This leads to the estimator 
\[\hat{\boldsymbol\theta}_N:= F^{-1}(\hat{\boldsymbol m}_N).\]
\end{itemize}

We know discuss  the well-definedness of the estimator $\hat{\boldsymbol\theta}_N$.
We consider the parameter space
\[
\Theta= \{ (p,\LI,\LG) : p\in(0,1), \,\E[I]>0,\, \E[G]>0, \,p\,\E[G]<1 \}. 
\]
To properly define $\hat{\boldsymbol\theta}_N$ as $F^{-1}(\hat{\boldsymbol m}_N)$, we would like the following identifiability property to apply:
\begin{equation}
\mbox{For any ${\bs m}\in {\mathbb R}_+^3$, the equation ${\boldsymbol m} = F({\boldsymbol\theta})$ has a unique solution in $\Theta$.}
\tag{U}\label{U}
\end{equation}
The following lemma shows that for Poissonian $I$ and $G$, under the stability condition $\varrho=p\,\nu<1$, there is identifiability. 

\begin{lemma}[Identifiability]\label{lem:uni}
Under assumptions~\eqref{P} and \eqref{S}, the identifiability condition~\eqref{U} holds.
\end{lemma}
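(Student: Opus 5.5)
The plan is to reduce \eqref{U} to a two-dimensional injectivity problem by passing from raw moments to centred ones. Since $(m_1,m_2,m_{12})\mapsto(m_1,\,m_2-m_1^2,\,m_{12}-m_1^2)$ is a bijection, it suffices to show that $\theta\mapsto(\E[Z_n],\ {\mathbb V}{\rm ar}[Z_n],\ {\mathbb C}{\rm ov}[Z_{n+1},Z_n])$ is injective on $\Theta$. (Uniqueness is what is at stake; existence can only hold for $\bs m$ in the range of $F$, and I would state \eqref{U} in that form.)

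The first step is to compute these three quantities in the Poisson case. Write $\mu:=\LI/(1-p\LG)=\E[X_n]$, so that $\E[Y_n]=p\mu$ and $\E[Z_n]=(1+p)\mu$ by \eqref{momPois}. By \eqref{XY}, $X_n$ and $Y_n$ are uncorrelated. The law of total variance applied to \eqref{eq:mod_dyn} gives ${\mathbb V}{\rm ar}[X_{n+1}]=\LI+\LG\,p\mu+\LG^2{\mathbb V}{\rm ar}[Y_n]=\mu+\LG^2{\mathbb V}{\rm ar}[Y_n]$ and ${\mathbb V}{\rm ar}[Y_{n+1}]=p(1-p)\mu+p^2{\mathbb V}{\rm ar}[X_n]$. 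In stationarity this yields ${\mathbb V}{\rm ar}[X_n]=\mu\,a(p,\LG)$ and ${\mathbb V}{\rm ar}[Y_n]=\mu\,b(p,\LG)$, where
\[
a=\frac{1+\LG^2p(1-p)}{1-p^2\LG^2},\qquad b=p(1-p)+p^2a .
\]
Conditioning on $(X_n,Y_n)$, the covariances are ${\mathbb C}{\rm ov}[X_{n+1},X_n]=\LG\,{\mathbb C}{\rm ov}[Y_n,X_n]=0$, ${\mathbb C}{\rm ov}[X_{n+1},Y_n]=\LG{\mathbb V}{\rm ar}[Y_n]$, ${\mathbb C}{\rm ov}[Y_{n+1},X_n]=p{\mathbb V}{\rm ar}[X_n]$ and ${\mathbb C}{\rm ov}[Y_{n+1},Y_n]=0$. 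Hence ${\mathbb V}{\rm ar}[Z_n]=\mu(a+b)$ and ${\mathbb C}{\rm ov}[Z_{n+1},Z_n]=\mu(\LG b+pa)$.

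The key structural observation is that $\LI$ enters only through the factor $\mu$. Dividing by $\E[Z_n]$ gives the two $\LI$-free ratios
\[
r_1:=\frac{{\mathbb V}{\rm ar}[Z_n]}{\E[Z_n]}=\frac{a+b}{1+p},\qquad r_2:=\frac{{\mathbb C}{\rm ov}[Z_{n+1},Z_n]}{\E[Z_n]}=\frac{\LG b+pa}{1+p}.
\]
Once $(p,\LG)$ is recovered from $(r_1,r_2)$, $\LI=\E[Z_n](1-p\LG)/(1+p)$ is determined uniquely. So \eqref{U} reduces to injectivity of $\Phi:(p,\LG)\mapsto(r_1,r_2)$ on $D=\{0<p<1,\ \LG>0,\ p\LG<1\}$.

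This injectivity is the main obstacle. I would first try the substitution $\varrho=p\LG\in(0,1)$, which turns $a$ into $(1+\varrho^2(1-p)/p)/(1-\varrho^2)$ and simplifies both ratios into rational functions of $(p,\varrho)$. I would then fix $p$ and check that $r_1$ is strictly increasing in $\varrho$; the numerator of $a$ increases and its denominator decreases, so $a$ and $b$ increase, and since $r_1$ does not involve $\LG$ separately this step should be immediate. The equation $r_1=\text{const}$ then defines $\varrho=\varrho(p)$ uniquely, and the task becomes showing that $p\mapsto r_2(p,\varrho(p))$ is strictly monotone along this level curve. Equivalently, the Jacobian determinant of $\Phi$ has constant sign on $D$; together with level curves of $r_1$ that are connected graphs over $p$, this gives global injectivity.

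If the sign of the Jacobian turns out to be awkward, I would fall back on elimination. From $r_1$ one can solve for $a$ explicitly in terms of $p$, using $a+b=a(1+p^2)+p(1-p)$. Substituting into $r_2$ gives $\LG$ as an explicit function of $p$, and then the defining relation for $a$ yields a single polynomial equation in $p$. The remaining work is to show, via a sign or Descartes-type argument, that this polynomial has at most one root in $(0,1)$ compatible with $p\LG<1$.
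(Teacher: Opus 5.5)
Your reduction to $\lambda$-free ratios is correct, and so are your moment formulas: ${\mathbb V}{\rm ar}[Z_n]=\mu(a+b)$ and ${\mathbb C}{\rm ov}[Z_{n+1},Z_n]=\mu(\nu b+pa)$ reproduce the paper's two displayed expressions. You are also right that (U) can only hold on the range of $F$; for instance $r_1>1$ always.

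Your route differs from the paper's in which ratio is used to cut out a curve. The paper fixes $p$ and solves ${\rm Cov}/{\rm Var}=R$ for $\nu$, asserting that this ratio is increasing in $\nu$. That assertion fails near $p\nu=1$. For $p=0.1$ the ratio is about $0.246$ at $\nu=5$ but about $0.206$ at $\nu=9$, so $\nu(p)$ need not be unique. The paper then asserts without proof that ${\rm Var}/{\rm Mean}$ is monotone along that curve, and its displayed formula for this ratio also drops a factor $1/(1+p)$. Your level sets $\{r_1=t\}$ are genuine graphs $\varrho=\varrho_t(p)$ over all of $(0,1)$, because $r_1$ increases strictly from $1$ to $\infty$ in $\varrho$. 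Your decomposition is therefore the sounder one.

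The gap is that the decisive step is only announced, not verified: the constant sign of the Jacobian, or equivalently strict monotonicity of $r_2$ along $\{r_1=t\}$. The quartic/Descartes fallback is likewise not carried out. The step does close, and cleanly, if you put $u:=\varrho/(1-\varrho)$. This is a bijection $(0,1)\to(0,\infty)$ for each fixed $p$, so injectivity in $(p,u)$ is the same as injectivity in $(p,\nu)$. Then
\[
r_2=\frac{p+u}{1+p},\qquad r_1=\frac{(1+u)^2+h(p)\,u^2}{1+2u},\qquad h(p)=\frac{1-p}{p(1+p)}.
\]
The partial derivatives are:
\begin{itemize}
\item $\partial_u r_1=2u(1+u)(1+h)/(1+2u)^2>0$;
\item $\partial_p r_1=h'(p)\,u^2/(1+2u)$, with $h'(p)=-(1+2p-p^2)/(p^2(1+p)^2)<0$;
\item $\partial_u r_2=1/(1+p)$;
\item $\partial_p r_2=(1-u)/(1+p)^2$.
\end{itemize}
Along $\{r_1=t\}$,
\[
\frac{{\rm d}r_2}{{\rm d}p}=\frac{\partial_p r_2\,\partial_u r_1-\partial_u r_2\,\partial_p r_1}{\partial_u r_1}.
\]
The numerator equals $u\,P(p,u)/\bigl(p^2(1+p)^3(1+2u)^2\bigr)$, where
\[
P(p,u)=2p(1+p^2)+(1+2p-p^2)\,u+2(1-p)(1+p)^2u^2>0 .
\]
Hence $r_2$ is strictly increasing along every level curve of $r_1$, which gives injectivity of $(p,\nu)\mapsto(r_1,r_2)$ on $D$. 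Recovering $\lambda$ from $\E[Z_n]$ then yields (U) in your corrected form. With this computation inserted your argument is complete, and the polynomial elimination is unnecessary. Without it, the proposal is a correct plan rather than a proof.
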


{\it Proof.} 
By direct computations, we can write
\begin{align}
\mathbb{E}[Z_n^2] - \mathbb{E}[Z_n]^2 &= \dfrac{\LI(1+p+p(1-p)\LG^2)}{(1-p\LG)^2(1+p\LG)}, \\
\mathbb{E}[Z_{n+1}Z_n] - \mathbb{E}[Z_n]^2 &= \dfrac{p\LI(1+\LG+p(1-p)\LG^2)}{(1-p\LG)^2(1+p\LG)}.
\end{align}
Thus, by letting $R:=(\mathbb{E}[Z_{n+1}Z_n] - \mathbb{E}[Z_n]^2) /(\mathbb{E}[Z_n^2] - \mathbb{E}[Z_n]^2)$,
\[
\dfrac{\mathbb{E}[Z_{n+1}Z_n] - \mathbb{E}[Z_n]^2}{\mathbb{E}[Z_n^2] - \mathbb{E}[Z_n]^2} = \dfrac{p(1+\LG+p(1-p)\LG^2)}{1+p+p(1-p)\LG^2} =: F(p,\LG);
\]
observe that $\lambda$ has canceled.
Note that the function $F(p,\LG)$ is strictly increasing in $p$ for any admissible $\LG>0$, namely, $p\LG<1$. Indeed, by a direct computation, $\partial F/\partial p$ is positive if and only if
\[
\LG[1+2p(1-p)\LG+p^2(p-1)^2\LG^3] + (1-p\LG)(1+p\LG)>0,
\]
which is true for any $\theta\in\Theta$. By arguing in a similar manner, we deduce that the function $F(p,\LG)$ is strictly increasing in $\LG$ for any admissible $p\in(0,1)$. Thus, for any fixed $p$, there exists a unique solution $\LG(p)$ to the equation $F(p,\LG(p))=R$, and from the implicit function theorem we have
\[
\dfrac{\mathrm d \LG(p)}{\mathrm d p} = - \dfrac{\partial F/\partial p}{\partial F/\partial\LG} <0.
\]
This implies that the map $p\mapsto\LG(p)$  is monotonically decreasing. 

Considering the ratio of $\mathbb{E}[Z_n^2] - \mathbb{E}[Z_n]^2$ and $\mathbb{E}[Z_n]$, we observe that again $\lambda$ cancels; from the equation
\[
\dfrac{\mathbb{E}[Z_n^2] - \mathbb{E}[Z_n]^2}{\mathbb{E}[Z_n]} = \dfrac{1+p+p(1-p)\LG(p)^2}{1-p^2\LG(p)^2}=:G(p,\LG(p)),
\]
which is strictly increasing in $p$, we deduce that the intersection between the curve $F(p,\LG(p))=R$ and the function $G(p,\LG(p))$ consists of a unique $p\in(0,1)$. This in turn implies that $\LG$ is uniquely identifiable. Finally, by using $\mathbb{E}[Z_n]=\LI(p+1)/(1-p\LG)$ with $p$ and $\LG$ being identified already, the parameter $\LI$ is also uniquely determined.  \hfill$\Box$

\section{Asymptotic normality}\label{sec:norm}

The main goal of this section is to prove that the estimator $\hat{\boldsymbol\theta}_N$ is asymptotically normal. The approach is to first establish that the estimator $\hat{\boldsymbol m}_N$ is asymptotically normal, and to then apply the delta-method. 

\subsection{Main result} Based on the main result of this section (Theorem \ref{thm:main} below), we have that for large $N$ the estimator $\hat{\bs\theta}_N$ behaves as the unknown parameter ${\bs\theta}$, perturbed by ${\mathscr N}/{\sqrt{N}}$,  where ${\mathscr N}$ is a trivariate centered normal random vector.

\begin{theorem}[Asymptotic normality]  \label{thm:main} Assume that $I$ and $G$ have finite fourth moments, and that the identifiability condition \eqref{U} and  the stability condition \eqref{S} hold. Then the estimator $\hat{\bs\theta}_N$ is asymptotically normal as $N \to \infty$, that is,
\[\sqrt{N}\big(\hat{\bs \theta}_N-\bs\theta\big)\to {\mathscr N}\]
as $N\to \infty$,
with ${\mathscr N}$ denoting a trivariate centered normal random vector with non-degenerate covariance matrix $\Sigma$, {which is given in \eqref{eq:Sigma} below.}
\end{theorem}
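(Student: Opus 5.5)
The proof has two stages. First we establish a central limit theorem for the empirical moment vector $\hat{\boldsymbol m}_N$. Then we transfer it to $\hat{\bs\theta}_N=F^{-1}(\hat{\boldsymbol m}_N)$ via the delta-method.

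\emph{Ergodicity and mixing of the Markov chain.} Consider the bivariate chain $W_n:=(X_n,Y_n)$ on $\mathbb N_0^2$. We show it is geometrically ergodic under \eqref{S} by a Foster--Lyapunov drift argument with the linear function $V(x,y)=1+x+c\,y$. Here $c$ is chosen so that $pc<1$ and $\E[G]<c$; such $c$ exists because $p\,\E[G]<1$. Then
\[
\E[V(W_{n+1})\mid W_n=(x,y)]=1+\E[I]+y\,\E[G]+c\,p\,x\le \kappa V(x,y)+b
\]
for some $\kappa<1$. Since the chain is irreducible on the states reachable from $(0,0)$ (positive Poisson/immigration probabilities make it aperiodic), this gives geometric ergodicity.

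To handle the functionals $Z_n^2$ and $Z_{n+1}Z_n$, we need a drift for a higher power. We would use $V_4(x,y)=1+x^4+c\,y^4$: the conditional fourth moments of $I+\sum_{i\le y}G_i$ and $\mathrm{Bin}(x,p)$ have leading terms $y^4\E[G]^4$ and $p^4x^4$, with lower-order remainders. This step is where the finite fourth moments of $I$ and $G$ are used. Geometric ergodicity with $V_4$-drift yields $\E_\pi[Z^{4}]<\infty$ together with geometric $V$-uniform mixing.

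\emph{CLT for $\hat{\boldsymbol m}_N$.} Consider the augmented chain $\widetilde W_n=(W_n,W_{n+1})$, which is again geometrically ergodic. Take any linear combination $f=a_1Z_n+a_2Z_n^2+a_3Z_{n+1}Z_n$. We have $|f|^2\lesssim V_4(W_n)+V_4(W_{n+1})$, so we may invoke the Markov chain CLT for geometrically ergodic chains with $f^2\le V$ (Meyn--Tweedie, Theorem 17.0.1). The different normalisations $N$ versus $N-1$ are asymptotically negligible. The Cramér--Wold device then gives $\sqrt N(\hat{\boldsymbol m}_N-\boldsymbol m)\to\mathcal N(0,\Gamma)$ with
\[
\Gamma=\mathrm{Cov}(h(\widetilde W_0))+\sum_{k\ge1}\bigl(\mathrm{Cov}(h(\widetilde W_0),h(\widetilde W_k))+\mathrm{Cov}(h(\widetilde W_k),h(\widetilde W_0))\bigr),
\]
where $h=(Z_0,Z_0^2,Z_1Z_0)$. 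These series converge geometrically. Because the dynamics \eqref{eq:mod_dyn} are affine in conditional mean, the lagged covariances can be computed explicitly from the matrix recursions already derived in step (b), extended to third and fourth mixed moments. This yields the closed form \eqref{eq:Sigma}.

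\emph{Delta-method.} By \eqref{U}, $F$ is a bijection from $\Theta$ onto its image. It is smooth (rational in $\bs\theta$). We show its Jacobian $DF(\bs\theta)$ is invertible on $\Theta$, using the reduced parametrisation from the proof of Lemma~\ref{lem:uni}: the map $\bs\theta\mapsto(m_1,\ (m_2-m_1^2)/m_1,\ R)$ has triangular structure. Indeed, $R$ depends on $(p,\LG)$ with $\partial_pF,\partial_\LG F>0$; $G(p,\LG)$ has a transversal gradient; and $m_1$ depends linearly on $\LI$. So the Jacobian determinant reduces to $\partial_\LI m_1\cdot(\partial_pR\,\partial_\LG G-\partial_\LG R\,\partial_pG)\neq0$, a sign statement we verify from the explicit formulas. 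The inverse function theorem then makes $F^{-1}$ $C^1$ near $\boldsymbol m$. Since $\hat{\boldsymbol m}_N\to\boldsymbol m$ a.s. by the ergodic theorem, $\hat{\bs\theta}_N$ is eventually well defined, and the delta-method gives the result with $\Sigma=DF(\bs\theta)^{-1}\Gamma\,DF(\bs\theta)^{-\top}$.

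Non-degeneracy of $\Sigma$ reduces to positive definiteness of $\Gamma$. Assume to the contrary that $a^\top\Gamma a=0$ for some $a\neq0$. Then $a^\top h(\widetilde W_n)$ would be a coboundary $g(\widetilde W_{n+1})-g(\widetilde W_n)$ plus a constant. We rule this out using the full support of the Poisson innovations: for instance, letting $I_n$ vary while all other variables are fixed changes $Z_{n+1}$ freely.

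\emph{Main obstacle.} The hardest part is the explicit computation of $\Gamma$: the infinite sum of lagged covariances of quadratic functionals requires third and fourth stationary moments of $(X_n,Y_n)$ and geometric-series summation of the resulting $2\times2$ (and larger) linear recursions. The second difficulty is the rigorous non-degeneracy argument. For the computation, I would organize it via the linear recursion of the vector of all monomials of degree $\le2$ in $(X_n,Y_n)$, whose conditional expectation is affine in that vector. This expresses each lag-$k$ covariance as $A^k$ applied to a moment vector, and the sum becomes $(I-A)^{-1}$.
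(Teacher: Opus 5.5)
Your proposal is correct. It shares the paper's architecture: a joint CLT for $\hat{\boldsymbol m}_N$ via Cram\'er--Wold, then the delta-method. Your $\Gamma$ is the paper's $\Sigma_\varsigma$, and your $DF(\boldsymbol\theta)^{-1}$ plays the role of $G(\boldsymbol\theta)$ in \eqref{eq:Sigma}. Where you differ is in how the probabilistic step is justified, and your route is more robust.

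The paper appeals to Ibragimov's CLT for stationary sequences and only checks that the candidate long-run variances are finite, by summing the matrix-geometric covariance series built from $D_1$, $D_2$, $\bar D_2$. Finite variance alone is not a sufficient hypothesis for that theorem, since some mixing rate is needed. The paper also takes finiteness of the stationary fourth moments for granted. Your Foster--Lyapunov drift for $V_4$ works because a $c$ with $\mathbb{E}[G]^4<c<p^{-4}$ exists under \eqref{S}. It gives both $\pi(V_4)<\infty$ and $V$-uniform ergodicity at once, so the Meyn--Tweedie CLT applies from any initial law.

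You also supply two ingredients the paper never verifies:
\begin{itemize}
\item invertibility of $DF(\boldsymbol\theta)$, which the delta-method requires and which does not follow from the injectivity in \eqref{U};
\item positive definiteness of $\Gamma$, without which the asserted non-degeneracy of $\Sigma$ is unsupported.
\end{itemize}
Your plan for the explicit covariance (one block-triangular recursion on all monomials of degree at most $2$, summed via $(I-A)^{-1}$) is the paper's computation in compact form. The lower off-diagonal block of $(I-A)^{-1}$ is exactly the paper's $D_2^+=(I-D_2)^{-1}\bar D_2(I-D_1)^{-1}$.

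There are three details to repair.

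(i) Do not rely on $\partial_\nu R>0$ from Lemma~\ref{lem:uni}, because it fails on part of $\Theta$: for $p=0.1$ one has $R\approx 0.254$ at $\nu=4$ but $R\approx 0.206$ at $\nu=9$. Also recompute your second coordinate. The correct value is $(m_2-m_1^2)/m_1=\frac{1+p+p(1-p)\nu^2}{(1+p)(1-p^2\nu^2)}$; the display in that proof is missing the factor $(1+p)^{-1}$. Calling this ratio $H$, all you need is that $\partial_pH\,\partial_\nu R-\partial_\nu H\,\partial_pR$ does not vanish. A direct computation shows it is strictly negative on all of $\Theta$: after clearing positive denominators, the only term of indefinite sign occurs for $\nu<1$, and it is dominated using $p\nu<1$. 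So your conclusion stands.

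(ii) For the pair chain $(W_n,W_{n+1})$, the unweighted $V_4(w)+V_4(w')$ does not give a contracting drift. Take $\epsilon V_4(w)+V_4(w')$ with small $\epsilon>0$ instead; it still dominates $f^2$ up to a constant.

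(iii) In the non-degeneracy step, first use the Markov property to show that the coboundary must have the form $\phi(W_n)-\phi(W_{n+1})$. The states $(0,0)$ and $(0,1)$ both lead to every $(x',0)$, so comparing them forces $a_3=0$. Then $a_1z+a_2z^2$ must be constant on $\mathbb{N}_0$, which forces $a_1=a_2=0$. This step, like your irreducibility and aperiodicity claims, uses the Poisson supports. That is consistent with \eqref{U} having been established only under \eqref{P}.
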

The main goal of this subsection is to prove this theorem, conditional on the entries of $\Sigma$ being finite. In the next subsection we show that these entries are indeed finite if the stability condition is in place. 

\medskip

{\it Proof.} The proof starts by observing that, under our stability condition,  $(X_n,Y_n,X_{n+1},Y_{n+1})$ is a stationary Markov chain on the state space ${\mathbb N}_0^4$. By virtue of the central limit
for stationary processes \cite{ibragimov1962}, we have that, for a function $f:{\mathbb N}_0^{4}\to {\mathbb R}$,
\[\sqrt{N}\left(\frac{1}{N}\sum_{n=1}^{N-1}f(X_n,Y_n,X_{n+1},Y_{n+1})- \E[f(X_0,Y_0,X_{1},Y_{1})]\right)\]
converges to a zero-mean normally distributed random variable, with a variance $\sigma^2_f$ that is parameterized by the chosen function $f$, under the condition that the candidate variance is finite. Then we pick, for ${\alpha}_1,\alpha_2,\alpha_3\in{\mathbb R}$,
\[f(x_0,y_0,x_1,y_1):={\alpha}_1\,{\bs 1}^\top \left(\begin{array}{c}x_0\\y_0\end{array}\right)+
{\alpha}_2\,{\bs 1}^\top \left(\begin{array}{c}x_0^2\\2x_0y_0\\y_0^2\end{array}\right)+
{\alpha}_3\,{\bs 1}^\top \left(\begin{array}{c}x_0x_1\\x_0y_1\\y_0x_1\\y_0y_1\end{array}\right).\]
Recalling \eqref{eq:EZ}--\eqref{eq:EZlag},
by applying the {\it Cram\'er-Wold device} \cite{kallenberg2002} we have proven that 
\[\sqrt{N}\left(\left(\begin{array}{c}
     \hat m_{1,N} \\\hat m_{2,N}\\\hat m_{12,N}
\end{array}\right)-\left(\begin{array}{c} \E[Z_0]\\ 
\E[Z_0^2]\\
      \E[Z_0 Z_1]
\end{array}\right)\right)\to {\mathscr N}_\varsigma\]
as $N\to\infty$, with ${\mathscr N}_\varsigma$ denoting a trivariate centered normal random vector with covariance matrix $\Sigma_\varsigma$, and where $\E[Z_0]$, $\E[Z_0^2]$ and $\E[Z_0Z_1]$ directly follow from \eqref{X and Y}--\eqref{Y2}. In the next subsection, we determine the entries of $\Sigma_\varsigma$, in particular showing that they are finite under the condition we imposed, namely that $I$ and $G$ have finite fourth moments. Then by the {\it delta-method} \cite{van1998}, recalling that $\smash{\hat{\boldsymbol\theta}_N:= F^{-1}(\hat{\boldsymbol m}_N)}$, the  statement follows, with
\begin{equation}\label{eq:Sigma}
\Sigma :=  G(\bs\theta)\,\Sigma_\varsigma \,G(\bs\theta)^\top,
\end{equation}
where $G(\bs\theta):=\nabla F^{-1}(\bs\theta)$ is the Jacobian of $F^{-1}(\cdot)$.       
\hfill$\Box$

\subsection{Entries of covariance matrix} \label{covm} In this subsection, we determine the entries of $\Sigma_\varsigma$, showing in the process that they are finite under the condition $\varrho < 1$.

We first introduce some notation. For stochastic vectors ${\bs A}\in{\mathbb R}^{\rm m}$ and ${\bs B}\in{\mathbb R}^n$, we define the objects
\[{\mathbb S}^\circ[{\bs A},{\bs B}]:=\E[{\bs A}\,{\bs B}^\top],\quad{\mathbb S}[{\bs A},{\bs B}]:=\E[{\bs A}\,{\bs B}^\top]-\E[{\bs A}]\,\E[{\bs B}^\top]. \]
Specifically, ${\mathbb S}[{\bs A},{\bs B}]$ is interpreted as the $m\times n$ covariance matrix pertaining to the vector ${\bs A}$ and ${\bs B}$. With slight abuse of notation, we write
\[{\mathbb S}[{\bs A}]:={\mathbb S}[{\bs A},{\bs A}]. \]
The vector ${\bs 1}$ is the all-ones vector (with a dimension that becomes clear from the context). 

\noindent (A)~We start by determining 
\[\varsigma\HH{1}{1}:=\lim_{N\to\infty} N\,{\mathbb V}{\rm ar}[\hat m_{1,N}]={\mathbb V}{\rm ar}[Z_0] +2 \sum_{n=1}^\infty {\mathbb C}{\rm ov}(Z_0,Z_n). \]
As a first step, we provide an expression, with ${\bs V}\Hh{1}_n:=(X_n,Y_n)^\top$, for the conditional mean
\[{\bs M}_n\Hh{1}:= \E\left[{\bs V}_n\Hh{1}\,|\,{\bs V}_0\Hh{1}\right].\] 
Clearly,
\[{\bs M}\Hh{1}_n=\sum_{k,\ell}\E[{\bs V}\Hh{1}_n\,|\,X_{n-1}=k,Y_{n-1}=\ell]\,{\mathbb P}(X_{n-1}=k,Y_{n-1}=\ell\,|\,X_0,Y_0).\]
From \eqref{eq:mod_dyn} we directly see that
$\E[{\bs V}_n\Hh{1}\,|\,X_{n-1}=k,Y_{n-1}=\ell]= (\E[I]+\ell\,\E[G], k\,p)^\top,$
thus obtaining the recursion
\[{\bs M}_n\Hh{1} = \left(\begin{array}{c}\E[I]+\E[Y_{n-1}\,|\,X_0,Y_0]\,\E[G]\\ \E[X_{n-1}\,|\,X_0,Y_0]\,p\end{array}\right)={\bs a}\Ll{1} + D\Ll{1} {\bs M}_{n-1}\Hh{1},\]
where
\[{\bs a}\Ll{1}:= \left(\begin{array}{c} \E[I]\\0
\end{array}\right) ,\quad D\Ll{1}:=\left(\begin{array}{cc}0&\E[G]\\p&0\end{array}\right).\]
This recursion can be solved by iterating $n$ times. We readily find, if the spectral radius $\|D\Ll{1}\|$ of $D\Ll{1}$ is strictly smaller than $1$, or equivalently $\varrho<1$,
\begin{equation}
\notag{\bs M}_n\Hh{1} = \sum_{m=0}^{n-1} D\Ll{1}^{m}{\bs a}\Ll{1} + D\Ll{1}^n {\bs M}_0\Hh{1}=(I-D\Ll{1})^{-1}(I-D\Ll{1}^n)\,{\bs a}\Ll{1} + D\Ll{1}^n {\bs M}_0\Hh{1},\end{equation}
and hence
\[{\bs \mu}\Ll{1}:=\E\left[{\bs V}_0\Hh{1}\right]=(I-D\Ll{1})^{-1}{\bs a}\Ll{1},\]
which is in line with \eqref{X and Y}.
It entails that we can alternatively write (recalling that $(I-D\Ll{1})^{-1}$ and $(I-D\Ll{1}^n)$ commute)
\begin{equation}
\label{Mn} {\bs M}_n\Hh{1} ={\bs \mu}\Ll{1} + D\Ll{1}^n\left({\bs V}_0\Hh{1}-{\bs \mu}\Ll{1}\right),
\end{equation}
using the obvious fact that ${\bs M}_0\Hh{1}={\bs V}_0\Hh{1}$.
It now follows from \eqref{Mn} that
\begin{align*}{\mathbb S}^\circ[{\bs V}_n\Hh{1},{\bs V}_0\Hh{1}]&=(I-D\Ll{1}^n)(I-D\Ll{1})^{-1}\,{\bs a}\Ll{1}{\bs \mu}\Ll{1}^\top + D\Ll{1}^n\,{\mathbb S}^\circ[{\bs V}_0\Hh{1}]={\bs \mu}\Ll{1}{\bs \mu}\Ll{1}^\top + D\Ll{1}^n ({\mathbb S}^\circ[{\bs V}_0\Hh{1}]-{\bs \mu}\Ll{1}{\bs \mu}\Ll{1}^\top),
\end{align*}
from which we conclude that ${\mathbb S}[{\bs V}_n\Hh{1},{\bs V}_0\Hh{1}]$ decays in a matrix-geometric way:
\[{\mathbb S}[{\bs V}_n\Hh{1},{\bs V}_0\Hh{1}]=D\Ll{1}^n\, {\mathbb S}[{\bs V}_0\Hh{1}].\]
This means that 
\[{\mathbb C}{\rm ov}(Z_0,Z_n) = {\bs 1}^\top D\Ll{1}^n\, {\mathbb S}[{\bs V}_0\Hh{1}]\,{\bs 1},\]
so that 
\begin{align*}
   \varsigma\HH{1}{1}&={\bs 1}^\top \,{{\mathbb S}[{\bs V}_n\Hh{1}]}
   \,{\bs 1}+2\cdot{\bs 1}^\top D\Ll{1}(I-D\Ll{1})^{-1}\, {\mathbb S}[{\bs V}_0\Hh{1}]\,{\bs 1}={\bs 1}^\top (I+D\Ll{1})(I-D\Ll{1})^{-1}\, {\mathbb S}[{\bs V}_0\Hh{1}]\,{\bs 1} <\infty,
\end{align*}
where it is observed that ${\mathbb S}[{\bs V}_0\Hh{1}] <\infty$ (entrywise, that is) if $I$ and $G$ have finite second moments (as a direct consequence of the Cauchy-Schwarz inequality). 

(B)~The next step is to study
\[\varsigma\HH{2}{2}:=\lim_{N\to\infty} N\,{\mathbb V}{\rm ar}[\hat m_{2,N}]={\mathbb V}{\rm ar}[Z_0^2] + 2\sum_{n=1}^\infty {\mathbb C}{\rm ov}(Z_0^2,Z_n^2). \]
We follow the same approach as under (A).
We start by defining the object
\[{\bs M}_n\Hh{2}:= \E\left[{\bs V}_n\Hh{2}\,|\,{\bs V}_0\Hh{1}\right],\]
with ${\bs V}\Hh{2}_n:=(X^2_n,X_nY_n, Y_n^2)^\top$.
Observe that, with as before ${\mathbb F}[G,I]=2\E[I]\,\E[G]+{\mathbb V}{\rm ar}[G]$, using \eqref{eq:mod_dyn} and elementary calculation rules,
\begin{align*}
    \E\left[{\bs V}\Hh{2}_n\,\Big|\,X_{n-1}=k,Y_{n-1}=\ell\right]&= \E\left[\left(\begin{array}{c}(I_{n-1}+\sum_{i=1}^\ell G_{n-1,i})^2\\(I_{n-1}+\sum_{i=1}^\ell G_{n-1,i})\,{\rm Bin}(k,p)\\{\rm Bin}^2(k,p)\end{array}\right)\right]\\
    &=\left(\begin{array}{c}\E[I^2]+{\mathbb F}[G,I]\,\ell+(\E[G])^2\ell^2\\p\,\E[I]\,k+p\,\E[G]\,k\ell\\p(1-p)\,k+p^2 \,k^2\end{array}\right).
\end{align*}
This leads to the following recursion, with ${\bs M}\Hh{1}_n$ given in \eqref{Mn}:
\[{\bs M}_n\Hh{2} ={\bs a}\Ll{2} + D\Ll{2} {\bs M}_{n-1}\Hh{2}+ \bar D\Ll{2} {\bs M}_{n-1}\Hh{1},\]
where 
\[{\bs a}\Ll{2}:= \left(\begin{array}{c} \E[I^2]\\0\\0
\end{array}\right) ,\:\: D\Ll{2}:=\left(\begin{array}{ccc}0&0&(\E[G])^2\\0&p\,\E[G]&0\\p^2&0&0\end{array}\right),\:\: \bar D\Ll{2}:=\left(\begin{array}{cc}0&{\mathbb F}[G,I]\\p\,\E[I]&0\\p(1-p)&0\end{array}\right)\hspace{-0.5mm}.\]
We find that
\[{\bs \mu}\Ll{2}:=\E\left[{\bs V}_0\Hh{2}\right]=(I-D\Ll{2})^{-1}\big({\bs a}\Ll{2}+\bar D\Ll{2}{\bs \mu}\Ll{1}\big),\]
which is in line with \eqref{XY} and \eqref{Y2}.
Solving the recursion yields
\[{\bs M}_n\Hh{2} =\sum_{m=0}^{n-1}D\Ll{2}^{\rm m} {\bs f}_{n-1-m} +D\Ll{2}^n {\bs M}_0\Hh{2}, \]
where, relying on \eqref{Mn},
\[{\bs f}_n:={\bs a}\Ll{2}+ \bar D\Ll{2}{\bs \mu}\Ll{1}+ \bar D\Ll{2} D^n\Ll{1}\big({\bs M}_0\Hh{1}-{\bs \mu}\Ll{1}\big).\]
We conclude that, for any $n\in{\mathbb N}$,
\begin{align*}
  {\bs M}_n\Hh{2} =\:&
(I-D\Ll{2})^{-1}(I-D\Ll{2}^n)\big({\bs a}\Ll{2}+ \bar D\Ll{2}{\bs \mu}\Ll{1}\big) + \sum_{m=0}^{n-1}D\Ll{2}^{\rm m}\bar D\Ll{2}D\Ll{1}^{n-1-m}\big({\bs M}_0\Hh{1}-{\bs \mu}\Ll{1}\big)+ D\Ll{2}^n {\bs M}_0\Hh{2}\\
=\:&{\bs \mu}\Ll{2}+\sum_{m=0}^{n-1}D\Ll{2}^{\rm m}\bar D\Ll{2}D\Ll{1}^{n-1-m}\big({\bs V}_0\Hh{1}-{\bs \mu}\Ll{1}\big)+ D\Ll{2}^n\big( {\bs V}_0\Hh{2} - {\bs \mu}\Ll{2}\big).
\end{align*}
Then,
\begin{align*}
{\mathbb S}[{\bs V}_n\Hh{2},{\bs V}_0\Hh{2}]&=\sum_{m=0}^{n-1}D\Ll{2}^{\rm m}\bar D\Ll{2}D\Ll{1}^{n-1-m}\,{\mathbb S}[{\bs V}_0\Hh{1},{\bs V}_0\Hh{2}]+ D\Ll{2}^n\, {\mathbb S}[{\bs V}_0\Hh{2}].
\end{align*}
Observe that, by swapping the order of the sums,
\[\sum_{n=1}^\infty\sum_{m=0}^{n-1}D\Ll{2}^{m}\bar D\Ll{2}D\Ll{1}^{n-1-m}=(I-D\Ll{2})^{-1}\bar D\Ll{2}(I-D\Ll{1})^{-1}=:D\Ll{2}^+\in{\mathbb R}^{3\times 2},\]
where the series converges because $\varrho<1$ implies $\|D\Ll{1}\|<1$ and $\|D\Ll{2}\|<1$.
Denoting ${\bs e}_2:=(1,2,1)^\top$, we thus obtain that
\begin{align*}
    \varsigma\HH{2}{2}=\:&{\bs e}_2^\top \,{{\mathbb S}[{\bs V}_0\Hh{2}]} \,
    {\bs e}_2+2\cdot{\bs e}_2^\top
D\Ll{2}^+\,{\mathbb S}[{\bs V}_0\Hh{1},{\bs V}_0\Hh{2}]\,{\bs e}_2+2\,\cdot{\bs e}_2^\top D\Ll{2}(I-D\Ll{2})^{-1} \,{\mathbb S}[{\bs V}_0\Hh{2}]\,{\bs e}_2\\
=\:&2\cdot{\bs e}_2^\top
D\Ll{2}^+\,{\mathbb S}[{\bs V}_0\Hh{1},{\bs V}_0\Hh{2}]\,{\bs e}_2+{\bs e}_2^\top (I+D\Ll{2})(I-D\Ll{2})^{-1} \,{\mathbb S}[{\bs V}_0\Hh{2}]\,{\bs e}_2<\infty,
\end{align*}
where the finiteness follows from the fact that ${\mathbb S}[{\bs V}_0\Hh{1},{\bs V}_0\Hh{2}]$ and ${\mathbb S}[{\bs V}_0\Hh{2}]$ are entrywise finite if $I$ and $G$ have finite fourth moments. 

(C)~We proceed by computing
\begin{align*}
    \varsigma\HH{2}{1}\equiv  \varsigma\HH{1}{2}&:=\lim_{N\to\infty} N\,{\mathbb C}{\rm ov}(\hat m_{1,N},\hat m_{2,N})= \sum_{n=-\infty}^\infty {\mathbb C}{\rm ov}(Z_0,Z_n^2) \\
    &=\sum_{n=0}^\infty {\mathbb C}{\rm ov}(Z_0^2,Z_n) +\sum_{n=1}^\infty {\mathbb C}{\rm ov}(Z_0,Z_n^2)
\end{align*}
This computation differs from those underlying (A) and (B), in that we can directly exploit our earlier computations. We are to evaluate
\begin{align*}
\Sigma_{n,{\rm I}}\HH{2}{1}&:={\mathbb S}[{\bs V}_n\Hh{1},{\bs V}_0\Hh{2}] ,\quad
\Sigma_{n,{\rm II}}\HH{2}{1}:={\mathbb S}[{\bs V}_n\Hh{2},{\bs V}_0\Hh{1}],
\end{align*}
which are matrices of dimension $2\times 3$ and $3\times 2$, respectively. 
Using the results from part (A), it follows directly that
${\smash \Sigma_{n,{\rm I}}\HH{2}{1}=D\Ll{1}^n \Sigma_{0,{\rm I}}\HH{2}{1}.}$
Analogously, using the results from part (B), 
\[\Sigma_{n,{\rm II}}\HH{2}{1}=\sum_{m=0}^{n-1}D\Ll{2}^{\rm m}\bar D\Ll{2}D\Ll{1}^{n-1-m}\,{\mathbb S}[{\bs V}_0\Hh{1}]+ D\Ll{2}^n \Sigma_{0,{\rm II}}\HH{2}{1}.\]
Upon combining the above,  
\begin{align*}
   \varsigma\HH{2}{1}=&\:{\bs 1}^\top\sum_{n=0}^\infty \Sigma_{n,{\rm I}}\HH{2}{1}\,{\bs e}_2+  {\bs e}_2^\top\sum_{n=1}^\infty \Sigma_{n,{\rm II}}\HH{2}{1}{\bs 1} \\
   =&\: {\bs 1}^\top (I-D\Ll{1})^{-1}\,{\mathbb S}[{\bs V}_0\Hh{1},{\bs V}_0\Hh{2}]\, {\bs e}_2+ {\bs e}_2^\top
D\Ll{2}^+\,{\mathbb S}[{\bs V}_0\Hh{1}]\,{\bs 1}+{\bs e}_2^\top D\Ll{2}(I-D\Ll{2})^{-1} {\mathbb S}[{\bs V}_0\Hh{2},{\bs V}_0\Hh{1}]\,{\bs 1}<\infty;
\end{align*}
here it has been used that $\smash{{\mathbb S}[{\bs V}_0\Hh{1},{\bs V}_0\Hh{2}]}$, $\smash{{\mathbb S}[{\bs V}_0\Hh{1}]}$ and $\smash{{\mathbb S}[{\bs V}_0\Hh{2},{\bs V}_0\Hh{1}]}$ (which is the transpose of $\smash{{\mathbb S}[{\bs V}_0\Hh{1},{\bs V}_0\Hh{2}]}$) are entrywise finite if $I$ and $G$ have finite third moments. 

{(D)~We now determine
\[\varsigma\HH{12}{12}:=\lim_{N\to\infty} N\,{\mathbb V}{\rm ar}[\hat m_{12,N}]={\mathbb V}{\rm ar}[Z_0Z_1] + 2\sum_{n=1}^\infty {\mathbb C}{\rm ov}(Z_0Z_1,Z_nZ_{n+1}). \]
 With with ${\bs V}\Hh{12}_n:=(X_nX_{n+1},X_nY_{n+1}, Y_nX_{n+1},Y_nY_{n+1})^\top$, we now work with the object
\[{\bs M}_{n,0}\Hh{12}:= \E\left[{\bs V}_n\Hh{12}\,|\,{\bs V}_0\Hh{1}\right] \qquad \qquad {\bs M}_{n,1}\Hh{12}:= \E\left[{\bs V}_n\Hh{12}\,|\,{\bs V}_1\Hh{1}\right].\]
By arguments analogous to those used above, we find that
\begin{align*}
    \E\left[{\bs V}\Hh{12}_n\,\Big|\,X_{n}=k,Y_{n}=\ell\right]&= \left(\begin{array}{c}\E[I] \,k + \E[G] \, k \ell \\
    p \, k^2 \\ 
    \E[I] \,\ell + \E[G] \, \ell^2\\
    p \,k\ell\end{array}\right).
\end{align*}
We thus find, with ${\bs M}_{n,1}\Hh{1}:= \E\left[{\bs V}_n\Hh{1}\,|\,{\bs V}_1\Hh{1}\right]$, and ${\bs M}_{n,1}\Hh{2}:= \E\left[{\bs V}_n\Hh{2}\,|\,{\bs V}_1\Hh{1}\right]$
\[{\bs M}_{n,0}\Hh{12} = D\Ll{12} {\bs M}_{n}\Hh{2} + \bar D\Ll{12} {\bs M}_{n}\Hh{1} \qquad\qquad {\bs M}_{n,1}\Hh{12} = D\Ll{12} {\bs M}_{n,1}\Hh{2} + \bar D\Ll{12} {\bs M}_{n,1}\Hh{1},\]
where, 
\begin{align*} D\Ll{12}:=\left(\begin{array}{ccc}0&\E[G]&0\\p&0&0\\0&0&\E[G]\\
0&p&0\end{array}\right),\:\: \bar D\Ll{12}:=\left(\begin{array}{cc}\E[I]&0\\0&0\\0&\E[I]\\
0&0\end{array}\right)\hspace{-0.5mm}.\end{align*}
From this relation, we conclude that 
\[{\bs \mu}\Ll{12} :=\E\left[{\bs V}\Hh{12}_0\right]= \E\left[{\bs V}\Hh{12}_1\right]= D\Ll{12} \, {\bs \mu}\Ll{2}+ \bar D\Ll{12} \, {\bs \mu}\Ll{1}.\]
For any $n\in{\mathbb N}$, following the convention that we define the empty sum to be 0, and suitably modifying and then substituting the expressions for ${\bs M}_{n}\Hh{1}$ and ${\bs M}_{n}\Hh{2}$ found in parts (A) and (B), respectively, we obtain, for $n\in{\mathbb N}$, 
\begin{align*}
   {\bs M}_{n,1}\Hh{12} =\:&{\bs \mu}\Ll{12}+D\Ll{12}  \sum_{m=0}^{n-2}D\Ll{2}^{\rm m}\bar D\Ll{2}D\Ll{1}^{n-2-m}\big({\bs V}_1\Hh{1}-{\bs \mu}\Ll{1}\big) \:+\\&D\Ll{12}D\Ll{2}^{n-1}\big( {\bs V}_1\Hh{2} - {\bs \mu}\Ll{2}\big)
   +\bar D\Ll{12}D\Ll{1}^{n-1}\left({\bs V}_1\Hh{1}-{\bs \mu}\Ll{1}\right)
\end{align*}
The reason we need to condition on ${\bs V}\Hh{1}_1$, rather than on ${\bs V}\Hh{1}_0$, is that
\[
{\mathbb S}[{\bs V}_n\Hh{12},{\bs V}_0\Hh{12}] = \E \left[ {\bs M}_{n,1}\Hh{12} \,  {\bs V}_0\Hh{12}\right] - {\bs \mu}\Ll{12}{\bs \mu}\Ll{12}^\top \neq \E \left[ {\bs M}_{n,0}\Hh{12} \,  {\bs V}_0\Hh{12}\right] - {\bs \mu}\Ll{12}{\bs \mu}\Ll{12}^\top \,.
\]
Hence, conditioning on ${\bs V}\Hh{1}_0$ would not yield the correct covariance expression.
We can now evaluate
\begin{align*}
{\mathbb S}[{\bs V}_n\Hh{12},{\bs V}_0\Hh{12}]=\:&D\Ll{12}\sum_{m=0}^{n-2}D\Ll{2}^{\rm m}\bar D\Ll{2}D\Ll{1}^{n-2-m}\,{\mathbb S}[{\bs V}_1\Hh{1},{\bs V}_0\Hh{12}]\:+\\&D\Ll{12}D\Ll{2}^{n-1}\, {\mathbb S}[{\bs V}_1\Hh{2},{\bs V}_0\Hh{12}] +\bar D\Ll{12}D\Ll{1}^{n-1}\,{\mathbb S}[{\bs V}_1\Hh{1},{\bs V}_0\Hh{12}].
\end{align*}
We conclude that 
\begin{align*}\varsigma\HH{12}{12} =\:& {\bs 1}^\top 
{\mathbb S}[{\bs V}_0\Hh{12}]\,
{\bs 1} + 2\cdot {{\bs 1}^\top D\Ll{12}} D\Ll{2}^+\,
{\mathbb S}[{\bs V}_1\Hh{1},{\bs V}_0\Hh{12}]\,{\bs 1}
\:+\\
&2\cdot {\bs 1}^\top D\Ll{12}(I-D\Ll{2})^{-1}
\, {\mathbb S}[{\bs V}_1\Hh{2},{\bs V}_0\Hh{12}]\,{\bs 1}
+2\cdot {\bs 1}^\top\bar D\Ll{12}(I-D\Ll{1})^{-1}\,
   {\mathbb S}[{\bs V}_1\Hh{1},{\bs V}_0\Hh{12}]\,{\bs 1}<\infty,
\end{align*}
observing that the entries of ${\mathbb S}[{\bs V}_0\Hh{12}]$, 
${\mathbb S}[{\bs V}_1\Hh{2},{\bs V}_0\Hh{12}]$ and $
{\mathbb S}[{\bs V}_1\Hh{1},{\bs V}_0\Hh{12}]$ are finite if $I$ and $G$ have finite fourth moments.


(E)~We next compute
\begin{align*}
    \varsigma\HH{1}{12}&:=\lim_{N\to\infty} N\,{\mathbb C}{\rm ov}(\hat m_{1,N},\hat m_{12,N})= \sum_{n=-\infty}^\infty {\mathbb C}{\rm ov}(Z_0,Z_{n+1}Z_n) \\
    &=\sum_{n=1}^\infty {\mathbb C}{\rm ov}(Z_1Z_0,Z_n) +\sum_{n=0}^\infty {\mathbb C}{\rm ov}(Z_0,Z_{n+1}Z_n) \,.
\end{align*}
Following similar arguments, we obtain
\begin{align*}
    {\bs M}_{n,1}\Hh{1} =\:&{\bs \mu}\Ll{1} + D\Ll{1}^{n-1}\left({\bs V}_1\Hh{1}-{\bs \mu}\Ll{1}\right) \\
    {\bs M}_{n,0}\Hh{12} =\:&{\bs \mu}\Ll{12}+D\Ll{12}  \sum_{m=0}^{n-1}D\Ll{2}^{\rm m}\bar D\Ll{2}D\Ll{1}^{n-1-m}\big({\bs V}_0\Hh{1}-{\bs \mu}\Ll{1}\big) \:+\\&D\Ll{12}D\Ll{2}^{n}\big( {\bs V}_0\Hh{2} - {\bs \mu}\Ll{2}\big)
   +\bar D\Ll{12}D\Ll{1}^{n}\left({\bs V}_0\Hh{1}-{\bs \mu}\Ll{1}\right),
\end{align*}
Using these expressions for ${\bs M}_{n,1}\Hh{1}$ and ${\bs M}_{n,0}\Hh{12}$ together with the results established earlier, we derive
\begin{align*}
{\mathbb S}[{\bs V}_n\Hh{1},{\bs V}_0\Hh{12}]=\:&D\Ll{1}^{n-1}\, {\mathbb S}[{\bs V}_1\Hh{1}, {\bs V}_0\Hh{12}]. \\
{\mathbb S}[{\bs V}_n\Hh{12},{\bs V}_0\Hh{1}]=\:&D\Ll{12}\sum_{m=0}^{n-1}D\Ll{2}^{\rm m}\bar D\Ll{2}D\Ll{1}^{n-1-m}\,{\mathbb S}[{\bs V}_0\Hh{1}]+D\Ll{12}D\Ll{2}^{n}\, {\mathbb S}[{\bs V}_0\Hh{2},{\bs V}_0\Hh{1}] +\bar D\Ll{12}D\Ll{1}^{n}\,{\mathbb S}[{\bs V}_0\Hh{1}]. 
\end{align*}
Combining the above expressions yields
\begin{align*}
   \varsigma\HH{1}{12}=&\:{\bs 1}^\top\sum_{n=1}^\infty {\mathbb S}[{\bs V}_n\Hh{1},{\bs V}_0\Hh{12}]\,{\bs 1} +  {\bs 1}^\top\sum_{n=0}^\infty {\mathbb S}[{\bs V}_n\Hh{12},{\bs V}_0\Hh{1}]\,{\bs 1} \\
   =&\: {\bs 1}^\top (I-D\Ll{1})^{-1}\,{\mathbb S}[{\bs V}_1\Hh{1},{\bs V}_0\Hh{12}]\, {\bs 1}+ {\bs 1}^\top
D\Ll{12} D\Ll{2}^+\,{\mathbb S}[{\bs V}_0\Hh{1}]\,{\bs 1}\:+\\&\: {\bs 1}^\top D\Ll{12}(I-D\Ll{2})^{-1}
\, {\mathbb S}[{\bs V}_0\Hh{2},{\bs V}_0\Hh{1}]\,{\bs 1}+{\bs 1}^\top\bar D\Ll{12}(I-D\Ll{1})^{-1}\,
   {\mathbb S}[{\bs V}_0\Hh{1}]\,{\bs 1}<\infty,
\end{align*}
where finiteness follows from the fact that all entries of ${\mathbb S}[{\bs V}_0\Hh{1}]$, ${\mathbb S}[{\bs V}_1\Hh{1},{\bs V}_0\Hh{12}]$, ${\mathbb S}[{\bs V}_0\Hh{2},{\bs V}_0\Hh{1}]$ are finite whenever the random variables $I$ and $G$ have finite third moments.

(F)~We finally proceed by an analogous line of argument to evaluate
\begin{align*}
    \varsigma\HH{2}{12}&:=\lim_{N\to\infty} N\,{\mathbb C}{\rm ov}(\hat m_{2,N},\hat m_{12,N})= \sum_{n=-\infty}^\infty {\mathbb C}{\rm ov}(Z_0^2,Z_{n+1}Z_n) \\
    &=\sum_{n=1}^\infty {\mathbb C}{\rm ov}(Z_1Z_0,Z_n^2) +\sum_{n=0}^\infty {\mathbb C}{\rm ov}(Z_0^2,Z_{n+1}Z_n)
\end{align*}
Using
\[
  {\bs M}_{n,1}\Hh{2} =\: {\bs \mu}\Ll{2}+\sum_{m=0}^{n-2}D\Ll{2}^{\rm m}\bar D\Ll{2}D\Ll{1}^{n-2-m}\big({\bs V}_1\Hh{1}-{\bs \mu}\Ll{1}\big)+ D\Ll{2}^{n-1}\big( {\bs V}_1\Hh{2} - {\bs \mu}\Ll{2}\big)
\]
together with the representations of ${\bs M}_{n,0}\Hh{12}$, one can derive the following covariance relations
\begin{align*}
{\mathbb S}[{\bs V}_n\Hh{2},{\bs V}_0\Hh{12}]=\:&\sum_{m=0}^{n-2}D\Ll{2}^{\rm m}\bar D\Ll{2}D\Ll{1}^{n-2-m}\,{\mathbb S}[{\bs V}_1\Hh{1},{\bs V}_0\Hh{12}] + D\Ll{2}^{n-1}\, {\mathbb S}[{\bs V}_1\Hh{2}, {\bs V}_0\Hh{12}], \\
{\mathbb S}[{\bs V}_n\Hh{12},{\bs V}_0\Hh{2}]=\:&D\Ll{12}\sum_{m=0}^{n-1}D\Ll{2}^{\rm m}\bar D\Ll{2}D\Ll{1}^{n-1-m}\,{\mathbb S}[{\bs V}_0\Hh{1}, {\bs V}_0\Hh{2}]\:+\\&D\Ll{12}D\Ll{2}^{n}\, {\mathbb S}[{\bs V}_0\Hh{2}] +\bar D\Ll{12}D\Ll{1}^{n}\,{\mathbb S}[{\bs V}_0\Hh{1}, {\bs V}_0\Hh{2}]. 
\end{align*}
Putting these components together, we arrive at
\begin{align*}
   \varsigma\HH{2}{12}=&\:{\bs e}_2^\top\sum_{n=1}^\infty {\mathbb S}[{\bs V}_n\Hh{2},{\bs V}_0\Hh{12}]{\bs 1} +  {\bs 1}^\top\sum_{n=0}^\infty {\mathbb S}[{\bs V}_n\Hh{12},{\bs V}_0\Hh{2}]{\bs e}_2 \\
   =&\: {\bs e}_2^\top
D\Ll{2}^+\,{\mathbb S}[{\bs V}_1\Hh{1}, {\bs V}_0\Hh{12}]{\bs 1} + {\bs e}_2^\top (I-D\Ll{2})^{-1} \,{\mathbb S}[{\bs V}_0\Hh{2}, {\bs V}_0\Hh{12}]\,{\bs 1} \:+\\&\:
{\bs 1}^\top D\Ll{12}
D\Ll{2}^+\,{\mathbb S}[{\bs V}_0\Hh{1}, {\bs V}_0\Hh{2}]\,{\bs e}_2 + {\bs 1}^\top D\Ll{12}(I-D\Ll{2})^{-1}
\, {\mathbb S}[{\bs V}_0\Hh{2}]\,{\bs e}_2
\:+\\& \:{\bs 1}^\top\bar D\Ll{12}(I-D\Ll{1})^{-1}\,
   {\mathbb S}[{\bs V}_0\Hh{1}, {\bs V}_0\Hh{2}]\,{\bs e}_2<\infty;
\end{align*}
the finiteness is due to the fact that all entries of the matrices $\smash{{\mathbb S}[{\bs V}_1\Hh{1}, {\bs V}_0\Hh{12}]}$, $\smash{{\mathbb S}[{\bs V}_0\Hh{2}, {\bs V}_0\Hh{12}]}$, $\smash{{\mathbb S}[{\bs V}_0\Hh{1}, {\bs V}_0\Hh{2}]}$ and $\smash{{\mathbb S}[{\bs V}_0\Hh{2}]}$
are finite whenever the random variables $I$ and $G$ have finite fourth moments.
}

\subsection{Algorithm for determining stationary moments}
The entries of the covariance matrix $\Sigma_\varsigma$, as identified in Subsection \ref{covm}, require access to the objects
\begin{equation}\label{eq:mom}
\smash{{\mathbb S}[{\bs V}_0\Hh{1}]},\quad \smash{{\mathbb S}[{\bs V}_0\Hh{2}]},\quad\smash{{\mathbb S}[{\bs V}_0\Hh{12}]},\quad \smash{{\mathbb S}[{\bs V}_0\Hh{1}, {\bs V}_0\Hh{2}]},\quad\smash{{\mathbb S}[{\bs V}_0\Hh{1}, {\bs V}_0\Hh{12}]},\quad\smash{{\mathbb S}[{\bs V}_0\Hh{2}, {\bs V}_0\Hh{12}]}.\end{equation}
The entries of these matrices can be evaluated once the moments
\[\varphi_{k\ell}:=\E\left[X^kY^\ell\right],\]
are available, where $(X,Y)$ follows the limiting distribution of $(X_n,Y_n)_{n\ge 0}$ as $n\to\infty$, under the stability condition $\varrho<1$.
For small values of $k$ and $\ell$, explicit expressions for $\varphi_{k\ell}$ can be derived in a straightforward manner. However, as $k$ and $\ell$ increase, the resulting expressions quickly become cumbersome. In particular, our analysis requires moments satisfying $k+\ell=4$, which leads to lengthy and unwieldy formulas. To address this difficulty, we therefore propose a recursive algorithm that allows arbitrary moments $\varphi_{k\ell}$ to be computed efficiently.

The starting point of the algorithm is the obvious identity, that is a direct consequence of the tower property,
\begin{equation}\label{eq:condexp}
\E[X_{n+1}^k Y_{n+1}^\ell] = \E\left[\E[X_{n+1}^k Y_{n+1}^\ell\,|\,X_n,Y_n]\right] = \E\left[\E[X_{n+1}^k\,|\,Y_n] \,\E[Y_{n+1}^\ell\,|\,X_n]\right];
\end{equation}
here it is observed that, conditional on $(X_n,Y_n)$, (a)~the random variables $X_{n+1}$ and $Y_{n+1}$ are independent, and (b)~$X_{n+1}$ depends only on $Y_n$, while $Y_{n+1}$ depends only on $X_n$.

We proceed by evaluating $\E[X_{n+1}^k\,|\,Y_n]$ and $\E[Y_{n+1}^\ell\,|\,X_n]$.
As a direct application of the binomial theorem, we obtain
\begin{align} \E[X_{n+1}^k\,|\,Y_n] &= \displaystyle \E\left[ \sum_{m=0}^k \binom{k}{m} I^{k-m} \left( \sum_{i=1}^{Y_n} G_{i} \right)^m \, \Bigg| \, Y_n \right]\notag \\ &= \displaystyle \sum_{m=0}^k \binom{k}{m} \E [ I^{k-m} ] \,\E\left[ \left( \sum_{i=1}^{Y_n} G_{i} \right)^m \,\Bigg|\, Y_n \right]\notag  \\ &= \E[I^k]+\displaystyle \sum_{m=1}^k \binom{k}{m} \E[ I^{k-m} ] \left( \sum_{j=1}^m (Y_n)_j \,B_{m,j} \big( \E[G],\E[G^2],...,\E[G^{m-j+1}]\big)\right) , \label{eq:exp1} \end{align}
where $(Y_n)_j := Y_n(Y_n-1)\cdots(Y_n-j+1)$ for $j \ge 1$, with $(Y_n)_0 := 1$, denotes the {\it falling factorial}, and $B_{m,j}:{\mathbb R}^{m-j+1}\to{\mathbb R}$ are the {\it partial Bell polynomials} \cite{PeccatiTaqqu2011}. Specifically, with $g_j := \E[G^j]$ and $S := \sum_{i=1}^y G_i$ for some $y\in{\mathbb N}$, the cases relevant for us are, besides the obvious $\E[S] = y\,g_1,$
\begin{align*}
 \E[S^2] =\:& y\,g_2 + y(y-1)\,g_1^2, \\[1ex]
 \E[S^3] =\:& y\,g_3 + 3\,y(y-1)\,g_2 g_1 + y(y-1)(y-2)\,g_1^3, \\[1ex]
 \E[S^4] =\:& y\,g_4 
+ 4\,y(y-1)\,g_3 g_1 
+ 3\,y(y-1)\,g_2^2
\:+\\
&6\,y(y-1)(y-2)\,g_2 g_1^2 
+ y(y-1)(y-2)(y-3)\,g_1^4.
\end{align*}
In general, the above reasoning reveals that, for suitably chosen coefficients $\beta^\circ_{jm}$,
\[\E\left[ \left( \sum_{i=1}^{Y_n} G_{i} \right)^m \,\Bigg|\, Y_n \right] = \sum_{j=1}^m \beta^\circ_{jm}\,Y_n^j.\]
This implies that, upon interchanging the order of summation,  we can rewrite \eqref{eq:exp1} as, with $\beta_{0k}:=\E[I^k]$ and $\smash{\beta_{jk}:=\sum_{m=j}^k\binom{k}{m}\,\E[I^{k-m}]\,\beta^\circ_{jm}}$ for $j\in\{1,\ldots,m\}$, 
\begin{equation}\label{EX}\E[X_{n+1}^k\,|\,Y_n]=\sum_{j=0}^k\beta_{jk}\,Y_n^j.
\end{equation}

We proceed by analyzing $\E[Y_{n+1}^\ell\,|\,X_n]$. Recall that, conditional on $X_n$, we have that $Y_{n+1}$ is binomially distributed with parameters $X_n$ and $p$. Hence,  with $S_{\ell,j}$ denoting the {\it Stirling numbers of the second kind},
\[ 
\E[Y_{n+1}^\ell\,|\,X_n] = \sum_{j=0}^\ell S_{\ell,j}\, (X_n)_j \, p^j.
\]
Specifically, if $B\sim {\rm Bin}(x,p)$ for some $x\in{\mathbb N}$, the cases relevant for us are, besides the obvious $\E[B]=x\,p$,
\begin{align*}
\E[B^2] &= x\,p + x(x-1) p^2,\\[1ex]
\E[B^3] &= x\,p + 3\,x(x-1) p^2 + x(x-1)(x-2) p^3,\\[1ex]
\E[B^4] &= x\,p + 7\,x(x-1) p^2 + 6\,x(x-1)(x-2) p^3 + x(x-1)(x-2)(x-3) p^4.
\end{align*}
 In general, the above reasoning reveals that we can identify coefficients $\gamma_{i\ell}$ such that 
 \begin{equation}\label{EY}\E[Y_{n+1}^\ell\,|\,X_n]=\sum_{i=0}^\ell\gamma_{i\ell}\,X_n^i.
 \end{equation}
 Upon combining \eqref{eq:condexp} with \eqref{EX} and \eqref{EY}, the following lemma follows.
 \begin{lemma} For any $k,\ell\ge 1$,
     \[\varphi_{k\ell}=\sum_{j=0}^k\sum_{i=0}^\ell \beta_{jk}\gamma_{i\ell}\,\varphi_{ij}. \]
 \end{lemma}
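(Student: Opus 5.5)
The plan is to start from the tower-property identity \eqref{eq:condexp}, insert the polynomial representations \eqref{EX} and \eqref{EY}, and then pass to stationarity.

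Fix $k,\ell\ge 1$. Conditional on $(X_n,Y_n)$, the variables $X_{n+1}$ and $Y_{n+1}$ are independent. Moreover, $X_{n+1}$ depends only on $Y_n$, through $I_n$ and $G_{n,1},\dots,G_{n,Y_n}$. Likewise, $Y_{n+1}$ depends only on $X_n$, through the binomial thinning, and the thinning randomness is independent of $I_n$ and the $G_{n,i}$. This justifies the factorisation in \eqref{eq:condexp}.

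Next I substitute \eqref{EX} and \eqref{EY} into the right-hand side of \eqref{eq:condexp}. This gives
\[
\E[X_{n+1}^kY_{n+1}^\ell]=\E\Big[\Big(\sum_{j=0}^k\beta_{jk}Y_n^j\Big)\Big(\sum_{i=0}^\ell\gamma_{i\ell}X_n^i\Big)\Big]=\sum_{j=0}^k\sum_{i=0}^\ell\beta_{jk}\gamma_{i\ell}\,\E[X_n^iY_n^j].
\]
Exchanging expectation and the finite double sum only requires that every mixed moment $\E[X_n^iY_n^j]$ with $i\le \ell$, $j\le k$ be finite.

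I would check this finiteness by induction on total degree, using the same linear recursion as in parts (A) and (B). The top-degree part of the recursion is governed by a matrix whose spectral radius is a power of $\sqrt{\varrho}$. So under \eqref{S}, and assuming $I$ and $G$ have finite moments of the relevant order ($k+\ell\le 4$ suffices here), the stationary moments exist.

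Finally, the process starts in stationarity, so $(X_{n+1},Y_{n+1})$ and $(X_n,Y_n)$ both have the law of $(X,Y)$. Hence the left-hand side equals $\varphi_{k\ell}$ and each $\E[X_n^iY_n^j]$ equals $\varphi_{ij}$, which is the claimed identity.

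The main obstacle is purely a bookkeeping one: matching indices. In \eqref{EX} the powers of $Y$ come with $\beta_{jk}$, and in \eqref{EY} the powers of $X$ come with $\gamma_{i\ell}$. The resulting mixed moment is therefore $\E[X^iY^j]$, so the index order in the statement's $\varphi_{ij}$ must be read with $i$ as the $X$-exponent. I would state this convention explicitly.

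It is also worth noting how the identity is used. The term $(i,j)=(\ell,k)$ can coincide with $(k,\ell)$, or $\varphi_{k\ell}$ can appear on both sides via $\varphi_{\ell k}$ when $k=\ell$. So the identity is a linear system for all moments of a given total degree in terms of lower-degree ones, rather than an explicit formula. I would remark that this system is uniquely solvable: its coefficient matrix is $I$ minus the same top-degree operator with spectral radius less than one, so it is invertible under \eqref{S}.
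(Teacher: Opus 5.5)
Your proposal is correct and follows the paper's own argument. The lemma is obtained by inserting \eqref{EX} and \eqref{EY} into the factorised identity \eqref{eq:condexp} and taking expectations in stationarity, with $\varphi_{ij}=\E[X^iY^j]$ read exactly as you do. The finiteness of the mixed moments, which you sketch and the paper leaves implicit, is best made rigorous by running the recursion from a fixed initial state and passing to the stationary law via Fatou's lemma; there the top-degree coupling $\varphi_{k\ell}\leftrightarrow\varphi_{\ell k}$ arises for $k\neq\ell$ (not for $k=\ell$, as your closing remark says) and has spectral radius $\varrho^{(k+\ell)/2}<1$.
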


 Now define
 \[\omega_{k\ell}:= \left(\sum_{j=0}^k\sum_{i=0}^\ell \beta_{jk}\gamma_{i\ell}\,\varphi_{ij}\right) - 
 \beta_{kk}\gamma_{\ell\ell}\,\varphi_{\ell k}.\]
 First fix a $k$ and $\ell$ such that $k\not =\ell$. Suppose that we have computed $\varphi_{ij}$ for all $(i,j)\not=(k,\ell)$ such that $i\le k$ and $j\le \ell$, and for all $(i,j)\not=(\ell,k)$ such that $i\le \ell$ and $j\le k$. This means that we know both $\omega_{k\ell}$ and $\omega_{\ell k}.$ Then
 \[\varphi_{k\ell}=\omega_{k\ell}+ \beta_{kk}\gamma_{\ell\ell}\,\varphi_{\ell k},\quad \varphi_{\ell k}=\omega_{\ell k}+ \beta_{\ell\ell}\gamma_{kk}\,\varphi_{k\ell}, \]
 so that
 \[\left(\begin{array}{c}
   \varphi_{k\ell}\\\varphi_{\ell k}
 \end{array}\right)=\left(\begin{array}{cc}
    1  & -\beta_{kk}\gamma_{\ell\ell} \\
     -\beta_{\ell\ell}\gamma_{kk} & 1
 \end{array}\right)^{-1}\left(\begin{array}{c} \omega_{k\ell}\\\omega_{\ell k}
 \end{array}\right).
 \]
 The case $k=\ell$ is simpler. Suppose that we have computed $\varphi_{ij}$ for all $(i,j)\not=(k,k)$ such that $i\le k$ and $j\le k$. It leads to the equation $\varphi_{kk}=\omega_{kk}+ \beta_{kk}\gamma_{kk}\,\varphi_{k k}$, so that
 \[\varphi_{kk} = \frac{\omega_{kk}}{1-\beta_{kk}\gamma_{kk}}.\]
 We can now calculate all moments that we need in \eqref{eq:mom}. For instance, an order that works is, in self-evident notation, $00\rightsquigarrow (01,10)\rightsquigarrow 11\rightsquigarrow (02,20)\rightsquigarrow(12,21)\rightsquigarrow 22\rightsquigarrow (03,30)\rightsquigarrow(13,31)\rightsquigarrow(04,40).$

\subsection{Numerical study}

We conduct a numerical study to assess the performance of the proposed method-of-moments estimators, under which the unknown parameters are $\lambda$, $\nu$, and $p$. A total of $1{,}000$ independent simulation runs are performed, and in each run a sample path of length $N = 10^5$ observation time points is generated, from which the corresponding method-of-moments estimates $(\hat{\lambda}_N, \hat{\nu}_N, \hat{p}_N)$ are computed.

The collection of estimates over the $1{,}000$ runs yields an empirical distribution for each parameter. To assess the distributional behavior of the estimators, Q--Q plots of the standardized estimation errors are constructed, where each estimator is centered at its true value and scaled by its empirical standard deviation across runs. The resulting standardized samples are compared against the standard normal distribution. Figure \ref{fig:qq0} confirms that the proposed estimators are asymptotically normal.
\begin{figure}[h]
\centering

\subcaptionbox{$\lambda=0.5$}
{\includegraphics[width=0.32\linewidth]{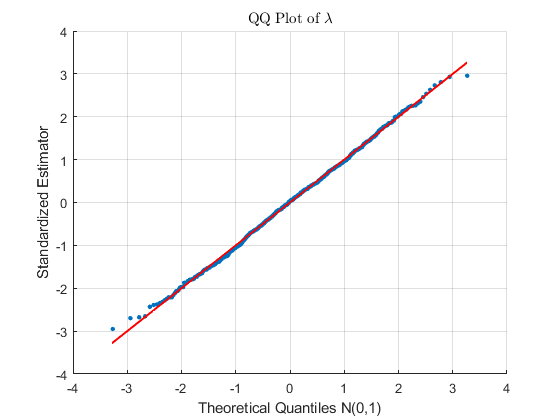}}
\subcaptionbox{$\nu=2$}
{\includegraphics[width=0.32\linewidth]{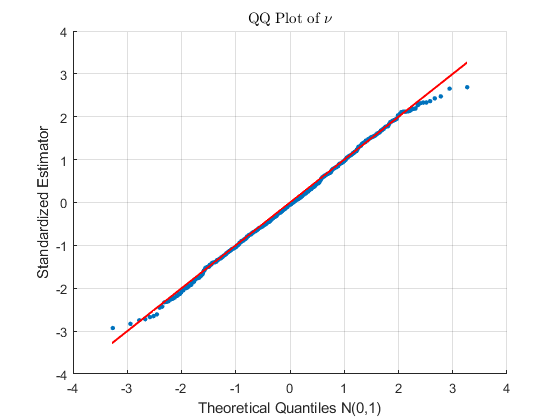}}
\subcaptionbox{$p = 0.3$}
{\includegraphics[width=0.32\linewidth]{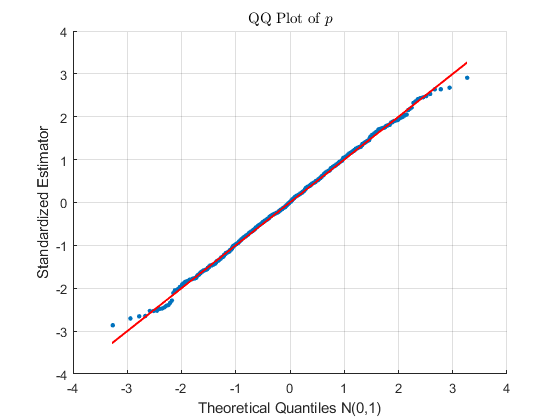}}

\caption{Normal Q–Q plots of the standardized estimators for $\lambda$, $\mu$, and $p$, where each estimator is centered at its true value (shown below each plot) and scaled by its empirical standard deviation computed from 1,000 independent replications.}
\label{fig:qq0}
\end{figure}

\section{Extensions}\label{sec:ext}

In this section, we discuss two types of extensions: (i) one concerning alternative observation schemes, and (ii) one allowing for more than two age groups.

\subsection{Different observation schemes} In this subsection, we consider two observation schemes: one in which only the adults are observed, and one in which only the juveniles are observed. Remarkably, in the `Poisson setting' \eqref{P}, observing the juvenile time series $(X_n)_{n\ge 0}$ allows us to identify all three model parameters, whereas observing the adult time series $(Y_n)_{n\ge 0}$ leads to identifiability issues that prevent full parameter estimation.

\subsubsection{Observing the adults} We first consider the situation in which the observed process is $\{Y_n\}_{n\geq 0}$. We have that $\E[Y_{n+1}Y_n] = (\E[Y_n])^2$, as a consequence of the easily verifiable relation
 \[
    \E[Y_{n+1}Y_n] = \E\left[\E\big[{\rm Bin}(X_n,p)Y_n\,\vert \,X_n\right]\big] = p\, \E[X_nY_n]
    \]
    and \eqref{momPois}.
    As a consequence, we cannot  base our estimator on  $(\E[Y_n], \E[Y_n^2], \E[Y_{n+1}Y_n])$, since $\E[Y_n]$ and $\E[Y_{n+1}Y_n]$ provide essentially the same information.
The following lemma shows that, in fact, the same identifiability issue arises if one replaces $\E[Y_{n+1}Y_n]$ by $\E[Y_{n+k}Y_n]$ for any $k \geq 1$. Define
$\GI := p \LI$ and recall that $\GG = p \LG$.

\begin{lemma}\label{L3}
Assume that \eqref{P} and \eqref{S} hold. Then the first and second moments of the observed adult population, namely $\E[Y_n]$ and $\E[Y_n^2]$, as well as the cross-moments $\E[Y_{n+k}Y_n]$ for any $k \geq 1$, are functions of the parameters $\GI$ and $\GG$ only.
\end{lemma}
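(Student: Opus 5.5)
The plan is to use the explicit stationary moments \eqref{momPois}--\eqref{momPois2} for the one-time moments. For the lagged moments I will use the matrix-geometric decay of covariances from part~(A) of Subsection~\ref{covm}. In both cases I then check that every occurrence of $p,\LI,\LG$ can be grouped into $\GI=p\LI$ and $\GG=p\LG$.

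\emph{First and second moments.} From \eqref{momPois}, $\E[Y_n]=\LI p/(1-p\LG)=\GI/(1-\GG)$. For \eqref{momPois2}, I distribute the prefactor $p^2$ over the numerator. This gives $p^2\LI^2=\GI^2$, $p^3\LI\LG^2=\GI\GG^2$ and $2p^3\LI^2\LG=2\GI^2\GG$. Hence
\[
\E[Y_n^2]=\frac{\GI}{1-\GG}+\frac{\GI^2+\GI(\GG^2+2\GI\GG)/(1-\GG)}{1-\GG^2},
\]
which is a function of $(\GI,\GG)$ only. As a by-product, $\mathbb{V}{\rm ar}[Y_n]$ depends only on $(\GI,\GG)$.

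\emph{Cross moments.} I will invoke the identity ${\mathbb S}[{\bs V}_k\Hh{1},{\bs V}_0\Hh{1}]=D\Ll{1}^k\,{\mathbb S}[{\bs V}_0\Hh{1}]$ established in part~(A). It gives ${\mathbb C}{\rm ov}(Y_{n+k},Y_n)={\bs e}^\top D\Ll{1}^k\,{\mathbb S}[{\bs V}_0\Hh{1}]\,{\bs e}$, with ${\bs e}=(0,1)^\top$. The key observation is that $D\Ll{1}^2=p\LG\, I=\GG\, I$. This yields two cases.
\begin{itemize}
\item For even $k=2j$, we get $D\Ll{1}^k=\GG^j I$, so ${\mathbb C}{\rm ov}(Y_{n+k},Y_n)=\GG^{j}\,\mathbb{V}{\rm ar}[Y_n]$.
\item For odd $k=2j+1$, we get $D\Ll{1}^k=\GG^j D\Ll{1}$. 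The relevant entry is $p\,{\mathbb C}{\rm ov}(X_n,Y_n)$, which vanishes because \eqref{momPois} gives $\E[X_nY_n]=\E[X_n]\E[Y_n]$.
\end{itemize}
Therefore
\[
\E[Y_{n+k}Y_n]=\Big(\frac{\GI}{1-\GG}\Big)^2+{\bs 1}_{\{k\ \mathrm{even}\}}\,\GG^{k/2}\,\mathbb{V}{\rm ar}[Y_n],
\]
which depends only on $(\GI,\GG)$ by the previous step. This also recovers the observation $\E[Y_{n+1}Y_n]=(\E[Y_n])^2$ made just before the lemma.

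No step is a real obstacle. The only point needing care is the algebraic regrouping of \eqref{momPois2}, in particular checking that every power of $p$ is matched with a factor $\LI$ or $\LG$. The parity argument via $D\Ll{1}^2=\GG I$ is the structural heart of the proof. If one prefers to avoid part~(A), the same conclusion can be reached by direct conditioning. One uses $\E[Y_{n+2}\mid X_n,Y_n]=p\LI+\GG\,Y_n$ and $\E[Y_{n+1}\mid X_n,Y_n]=pX_n$, and then inducts on $k$.
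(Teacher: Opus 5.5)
Your proof is correct. It handles the lagged moments by a different route from the paper, although the underlying mechanism is the same. For $\E[Y_n]$ and $\E[Y_n^2]$ you do what the paper does, regrouping \eqref{momPois}--\eqref{momPois2}, and your regrouping checks out.

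For the cross-moments, the paper uses a scalar two-step induction taken directly from the dynamics. Conditioning first on $X_{n+k}$ and then on $Y_{n+k-1}$ gives $\E[Y_{n+k+1}Y_n]=\GI\,\E[Y_n]+\GG\,\E[Y_{n+k-1}Y_n]$. The base cases are $\E[Y_n^2]$ and $\E[Y_{n+1}Y_n]=p\,\E[X_nY_n]$, and the paper never writes the cross-moments in closed form.

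You instead import ${\mathbb S}[{\bs V}_k\Hh{1},{\bs V}_0\Hh{1}]=D\Ll{1}^k\,{\mathbb S}[{\bs V}_0\Hh{1}]$ from part~(A) of Subsection~\ref{covm} and use $D\Ll{1}^2=\GG I$. This is the matrix form of the paper's recursion. Subtracting $(\E[Y_n])^2$ and using $\GI\,\E[Y_n]+\GG(\E[Y_n])^2=(\E[Y_n])^2$, the recursion becomes ${\mathbb C}{\rm ov}(Y_{n+k+1},Y_n)=\GG\,{\mathbb C}{\rm ov}(Y_{n+k-1},Y_n)$. Your parity split mirrors its two base cases.

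Your version gives the closed form $\E[Y_{n+k}Y_n]=(\E[Y_n])^2+\mathbf{1}_{\{k\ \mathrm{even}\}}\GG^{k/2}\,\mathbb{V}{\rm ar}[Y_n]$. This makes the non-identifiability more transparent than the paper's statement does. Odd lags carry no information beyond $\E[Y_n]$, which extends the $k=1$ remark preceding the lemma. Even lags add only $\GG$ and $\mathbb{V}{\rm ar}[Y_n]$.

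The paper's argument is more elementary and self-contained: it needs only the model dynamics and the tower property, not the covariance machinery of Section~\ref{sec:norm}. Your fallback via $\E[Y_{n+2}\mid X_n,Y_n]=\GI+\GG\,Y_n$ followed by induction is essentially the paper's proof.
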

\begin{proof}
    By \eqref{momPois}--\eqref{momPois2}, we obtain that $\E[Y_n]$ and $\E[Y_n^2]$ are functions of  $\gamma$ and $\varrho$:
    \[
    \E[Y_n] = \dfrac{\GI}{1-\GG}, \qquad \E[Y_n^2] = \dfrac{\GI}{1-\GG} + \dfrac{\GI^2}{1-\GG^2} + \dfrac{\GI\GG(\GG+2\GI)}{(1-\GG)(1-\GG^2)}.
    \]
    It remains to show that $\E[Z_{n+k}Z_n]$ depends solely on $\GI$ and $\GG$. We do this by by induction over $k\geq1$. For $k=1$, we already saw that $\E[Y_{n+1}Y_n] = (\E[Y_n])^2$.
    Assume now that the claim holds for $k\geq1$ and we prove it for $k+1$. We find
   \begin{align*}
\E[Y_{n+k+1}Y_n] 
&= \E\!\left[\E\!\left[{\rm Bin}(X_{n+k},p)\,Y_n \,\big\vert\, X_{n+k}\right]\right] = p\,\E[X_{n+k}Y_n] \\
&= p\,\E\!\left[\E\!\left[\left( I_{n+k-1} 
      + \sum_{i=1}^{Y_{n+k-1}} G_{n+k-1,i} \right) Y_n 
      \,\Bigg\vert\, Y_{n+k-1} \right]\right] = \GI \,\E[Y_n] + \GG \,\E[Y_{n+k-1}Y_n].
\end{align*}
If $k\geq2$, then the claim follows by the induction hypothesis, otherwise it follows from the explicit expression of $\E[Y_n^2]$. This concludes the proof.
\end{proof}

A direct consequence of Lemma \ref{L3} is that, when using moments of the form $\E[Y_n]$, $\E[Y_n^2]$, and $\E[Y_{n+k}Y_n]$, one can estimate only the combined quantities $p\lambda$ and $p\nu$, but not the three parameters $p$, $\lambda$, and $\nu$ individually. Lemma \ref{L4} below further shows that considering higher-order moments $\E[Y_n^k]$ alone does not resolve this issue, since the stationary distribution of $Y_n$ also depends solely on $\GI$ and $\GG$. Define by $\Psi_Y(\cdot)$ the probability generating function of $Y_n$ in stationarity. Let $\phi_\varrho^{(j)}(\cdot)$ denote the $j$-th iterate of the map $\phi_\varrho(\cdot)$ defined by $x \mapsto \exp(\GG(x-1))$.

\begin{lemma}\label{L4}
Assume that \eqref{P} and \eqref{S} hold. Then,
\[
\Psi_Y(s) = \exp\Biggl( \GI \sum_{j=0}^{\infty} \bigl( \phi_\varrho^{(j)}(s) - 1 \bigr) \Biggr).
\]
In particular, the stationary distribution of $Y_n$ depends solely on $\GI$ and $\GG$.
\end{lemma}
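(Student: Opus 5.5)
We will work with the adult process alone. The key observation is that, under \eqref{P}, the adult population is itself a Galton--Watson process with immigration. Conditional on $X_n$, we have $Y_{n+1}\sim{\rm Bin}(X_n,p)$, and $X_n = I_{n-1}+\sum_{i=1}^{Y_{n-1}}G_{n-1,i}$. Thinning a Poisson variable with probability $p$ yields a Poisson variable with mean multiplied by $p$, and the immigrants and the offspring of distinct adults are thinned independently. Hence
\[
Y_{n+1}\stackrel{d}{=} \tilde I_{n-1}+\sum_{i=1}^{Y_{n-1}}\tilde G_{n-1,i},
\]
with $\tilde I\sim{\rm Poisson}(\GI)$ and $\tilde G\sim{\rm Poisson}(\GG)$, all mutually independent and independent of $Y_{n-1}$. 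At the level of generating functions, we first compute
\[
\E[s^{Y_{n+1}}\mid X_n]=(1-p+ps)^{X_n},
\]
and then substitute $u=1-p+ps$ into
\[
\E[u^{X_n}\mid Y_{n-1}]=e^{\LI(u-1)}e^{\LG(u-1)Y_{n-1}}.
\]
Since $u-1=p(s-1)$, this gives
\[
\E[s^{Y_{n+1}}\mid Y_{n-1}] = e^{\GI(s-1)}\,\phi_\varrho(s)^{Y_{n-1}}.
\]

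Taking expectations and using stationarity (the marginal law of $Y_{n+1}$ equals that of $Y_{n-1}$), we obtain the functional equation
\[
\Psi_Y(s)=e^{\GI(s-1)}\,\Psi_Y(\phi_\varrho(s)),\qquad s\in[0,1].
\]
Iterating this equation $J$ times gives
\[
\Psi_Y(s)=\exp\Bigl(\GI\sum_{j=0}^{J-1}\bigl(\phi^{(j)}_\varrho(s)-1\bigr)\Bigr)\,\Psi_Y\bigl(\phi^{(J)}_\varrho(s)\bigr).
\]

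It remains to let $J\to\infty$, and we expect this to be the main technical step, though it is standard. The map $\phi_\varrho$ sends $[0,1]$ into itself, is increasing, and has fixed point $1$. Its derivative satisfies $\phi_\varrho'(x)=\GG\,\phi_\varrho(x)\le\GG<1$ on $[0,1]$, so by the mean value theorem
\[
0\le 1-\phi^{(j)}_\varrho(s)\le \GG^{\,j}(1-s).
\]
Two consequences follow. First, the series $\sum_j(\phi^{(j)}_\varrho(s)-1)$ converges absolutely, being dominated by a geometric series because of \eqref{S}. Second, $\phi^{(J)}_\varrho(s)\to1$, so by continuity of the generating function at $1$ we have $\Psi_Y(\phi^{(J)}_\varrho(s))\to\Psi_Y(1)=1$. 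Together these yield the claimed formula for $s\in[0,1]$. Since a generating function on $[0,1]$ determines the distribution, the law of $Y_n$ depends only on $(\GI,\GG)$.

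The one point needing care is the joint-independence claim in the thinning step, namely that the Poisson immigrant count and the offspring of each adult are thinned independently. This is handled cleanly by the generating-function computation above, which avoids any coupling argument. We also use the existence of the stationary law, which the paper already assumes under \eqref{S}.
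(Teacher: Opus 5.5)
Your proof is correct and follows the paper's argument: Poisson thinning gives the functional equation $\Psi_Y(s)=e^{\gamma(s-1)}\Psi_Y(\phi_\varrho(s))$. That equation is then iterated and passed to the limit, using that $\phi_\varrho$ is a contraction on $[0,1]$ with fixed point $1$ and that $\Psi_Y$ is continuous at $1$.

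Your version is, if anything, slightly more careful in two places. First, you condition correctly on $Y_{n-1}$; the paper writes $Y_{n+1}\sim{\rm Poisson}(\gamma+\varrho Y_n)$, an index slip that is harmless in stationarity. Second, your bound $0\le 1-\phi_\varrho^{(j)}(s)\le\varrho^{j}(1-s)$ makes the convergence of the series explicit, where the paper leaves it implicit.
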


\begin{proof}
    By the Poisson thinning property, we know that if $V\sim{\rm Poisson}(\lambda)$ and $W\,|\,V\sim{\rm Bin}(V,p)$, then $W\sim{\rm Poisson}(p\lambda)$. This implies that $Y_{n+1}\sim{\rm Poisson}(\GI+\GG Y_n)$. Let us denote by $\Psi_{Y_n}(\cdot)$ the probability generating function of $Y_n$, i.e., $\Psi_{Y_n}(s)=\E[s^{Y_n}]$, $s\in[0,1]$. Thus,
    \[
    \begin{array}{ll}
    \Psi_{Y_{n+1}}(s) &= \E\left[\E\left[ s^{Y_{n+1}} \vert\, Y_n \right]\right] = \E\left[ \exp(\GI+\GG Y_n) (s-1) \right] \\
    &= \exp(\GI(s-1)) \Psi_{Y_n}(\exp(\GG(s-1))).
    \end{array}
    \]
    In stationarity, we thus find
    \begin{equation}\label{eq:Gstationary}
    \Psi_Y(s) = {\rm e}^{\GI(s-1)} \Psi_Y\left( {\rm e}^{\GG(s-1)} \right).
    \end{equation}
   The goal is now to express $\Psi_Y(s)$ in a form that depends only on $\GI$ and $\GG$, thereby showing that the stationary distribution of the process $\{Y_n\}_{n \ge 0}$ also depends only on these quantities. To this end, consider the function
$\phi_\varrho(s) := \exp\bigl(\GG(s-1)\bigr)$, 
which maps the interval $[0,1]$ into itself. It is straightforward to verify that $s^* = 1$ is a fixed point of the equation $\phi_\varrho(s) = s$, i.e., $\phi_\varrho(s^*) = s^*$.
By the Banach fixed point theorem, this fixed point is unique. Indeed, $\phi_\varrho$ is a contraction, as $\phi_\varrho'(s) = \GG \exp(\GG(s-1)) \le \GG < 1$; here we have used that we imposed the stationary condition $\varrho<1$. Moreover, the theorem guarantees that the sequence defined recursively by $s_{k+1} = \phi_\varrho(s_k)$, for $k \ge 0$ and any initial $s_0 \in [0,1]$, converges to the fixed point:
\[
\lim_{k \to \infty} s_k = \lim_{k \to \infty} \phi_\varrho^{(k)}(s_0) = 1,
\]
where $\phi_\varrho^{(k)}$ denotes the $k$-th iterate of $\phi_\varrho$. By iterating \eqref{eq:Gstationary} $k$ times, we find
    \[
    \Psi_Y(s) = \exp\left( \GI \displaystyle\sum_{j=0}^{k-1}\left( \phi_\varrho^{(j)}(s)-1 \right) \right) \Psi_Y\left( \phi_\varrho^{(k)}(s) \right).
    \]
    Since $\Psi_Y(s)$ is continuous at $s=1$ and $\Psi_Y(1)=1$, we conclude
    \[
    \begin{array}{ll}
     \Psi_Y(s) &= \displaystyle\lim_{k\to\infty} \exp\left( \GI \displaystyle\sum_{j=0}^{k-1}\left( \phi_\varrho^{(j)}(s)-1 \right) \right) \Psi_Y\left( \phi_\varrho^{(k)}(s) \right) = \exp\left( \GI \displaystyle\sum_{j=0}^{\infty}\left( \phi_\varrho^{(j)}(s)-1 \right) \right).
    \end{array}
    \]
    Observing that the function $\phi_\varrho(\cdot)$ depends only on $\GG$, this concludes the proof.
\end{proof}

\subsubsection{Observing the juveniles} We proceed by considering the case that we observe $(X_n)_{n\ge 0}$. 
Similar to what we found for the case of observing $(Y_n)_{n\ge 0}$, we find that
\[\E[X_{n+1}X_n] = \LI \E[X_n] + \LG \E[X_nY_n] = \frac{\LI^2 }{ 1-p\LG} + \frac{p\LG\LI^2 }{ (1-p\LG)^2} = \frac{\LI^2 }{ (1-p\LG)^2} =(\E[X_n])^2,\]
so that the moment equations based on $\E[X_n]$ and $\E[X_{n+1}X_n]$ effectively provide the same information. Unlike in the case of observing $(Y_n)_{n\ge 0}$, this complication can be remedied by working with the moments $(\E[X_n],\E[X_n^2], \E[X_{n+2}X_n])$, where we use the (readily verified) relation
\[\E[X_{n+2}X_n]= \LI  \E[X_n] + p  \LG + \E[X_n^2] .\]
In particular, recalling the relations \eqref{momPois}--\eqref{momPois2}, it follows that $\E[X_{n+2}X_n]$ cannot be expressed in terms of $\gamma$ and $\varrho$ only.  The next lemma shows that, more generally, the stationary distribution of $X_n$ does not depend only on $\GI$ and $\GG$. Define by $\Psi_X(\cdot)$ the probability generating function of $X_n$ in stationarity. Recall that $\phi_\varrho^{(j)}(\cdot)$ denotes the $j$-th iterate of the map $\phi_\varrho(\cdot)$ defined by $x \mapsto \exp(\GG(x-1))$.

\begin{lemma}
    Assume that \eqref{P} and \eqref{S} hold. Then,
    \[
    \Psi_X (s) = \exp\left( \LI(s-1) \right) \exp\left( \GI \displaystyle\sum_{j=0}^{\infty}\left( \phi_\varrho^{(j)}\left( \exp\left(\LG(s-1) \right) \right)-1 \right) \right).
    \]
  In particular, the stationary distribution of $Y_n$ does not depend solely on $\GI$ and $\GG$.
\end{lemma}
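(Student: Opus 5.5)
The plan is to reduce everything to the recursion for $\Psi_Y$ already established in Lemma~\ref{L4}. By the dynamics \eqref{eq:mod_dyn}, $X_{n+1}=I_n+\sum_{i=1}^{Y_n}G_{n,i}$, with $I_n$ independent of $Y_n$ and of the $G_{n,i}$. Under \eqref{P} we condition on $Y_n$ and use the Poisson generating functions:
\[
\E\bigl[s^{X_{n+1}}\,\big|\,Y_n\bigr]=\exp\bigl(\LI(s-1)\bigr)\,\exp\bigl(\LG(s-1)\bigr)^{Y_n}.
\]
Taking expectations gives
\[
\Psi_{X_{n+1}}(s)=\exp\bigl(\LI(s-1)\bigr)\,\Psi_{Y_n}\bigl(\exp(\LG(s-1))\bigr).
\]
In stationarity this reads $\Psi_X(s)={\rm e}^{\LI(s-1)}\Psi_Y({\rm e}^{\LG(s-1)})$. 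Note that ${\rm e}^{\LG(s-1)}\in(0,1]$ for $s\in[0,1]$, so the argument lies in the domain where the formula of Lemma~\ref{L4} was derived.

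Substituting the closed form of Lemma~\ref{L4} at the point $u={\rm e}^{\LG(s-1)}$ then yields exactly the displayed expression:
\[
\Psi_X(s)=\exp\bigl(\LI(s-1)\bigr)\exp\Bigl(\GI\sum_{j\ge0}\bigl(\phi_\varrho^{(j)}(u)-1\bigr)\Bigr).
\]
Convergence of the series is inherited from Lemma~\ref{L4}, which holds for every argument in $[0,1]$ thanks to the contraction property of $\phi_\varrho$ under \eqref{S}. No new limiting argument is needed.

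For the ``in particular'' claim, I read it as concerning $X_n$ (the statement says $Y_n$, apparently a typo, since Lemma~\ref{L4} shows the opposite for $Y_n$). The aim is to show that $\Psi_X$ is not a function of $(\GI,\GG)$ alone. I would do this with two parameter triples sharing $\GI$ and $\GG$ but with different $p$, and compare a low-order quantity. Differentiating at $s=1$ gives the mean $\E[X]=\LI+\LG\,\E[Y]=\LI+\LG\GI/(1-\GG)$. This equals $\LI/(1-\GG)$, in line with \eqref{momPois}. With $\GI=p\LI$ and $\GG=p\LG$ fixed, we have $\LI=\GI/p$, so the mean $\GI/(p(1-\GG))$ varies with $p$. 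Hence the law of $X$ genuinely depends on $p$, and not only on $(\GI,\GG)$.

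The only delicate point is justifying the substitution into Lemma~\ref{L4}, i.e.\ checking that $u$ stays in $[0,1]$, together with the interpretation of the typo. Otherwise the argument is a one-line conditioning computation.
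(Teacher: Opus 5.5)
Your derivation of the generating function is the same as the paper's: condition on $Y_n$, use the Poisson generating functions of $I$ and $G$ to get $\Psi_X(s)={\rm e}^{\lambda(s-1)}\Psi_Y({\rm e}^{\nu(s-1)})$ in stationarity, and substitute the closed form of Lemma~\ref{L4}. Your check that ${\rm e}^{\nu(s-1)}\in(0,1]$ is a small point the paper leaves implicit. You are also right that the final claim is about $X_n$; the paper's own proof concludes the statement for $X_n$.

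The only real difference is in how that last claim is justified. The paper just observes that $\lambda$ and $\nu$ appear explicitly in the formula. That observation alone does not rule out the expression collapsing to a function of $(\gamma,\varrho)$. Your argument computes $\mathbb{E}[X]=\gamma/(p(1-\varrho))$ and compares it across the admissible triples $(p,\gamma/p,\varrho/p)$, $p\in(0,1)$, which all lie in $\Theta$. This gives an actual proof, so your version of that step is the more rigorous one.
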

\begin{proof}
    By noting that $X_{n+1}\sim{\rm Poisson}(\LI+\LG Y_n)$, the probability generating function $\Psi_{X_{n+1}}(\cdot)$ of $X_{n+1}$ can be written as
    \[
    \begin{array}{ll}
    \Psi_{X_{n+1}}(s) &= \E\left[ \E \left[ s^{X_{n+1}} \, \vert\, Y_n \right]\right] = \exp(\LI(s-1)) \E[\exp(\LG(s-1)Y_n)] \\
    &= \exp(\LI(s-1)) \Psi_{Y_{n}}(\exp(\LG(s-1))).
    \end{array}
    \]
    As a consequence, in stationarity,  
    \[
    \begin{array}{ll}
    \Psi_{X}(s) &= \exp(\LI(s-1)) \Psi_{Y}(\exp(\LG(s-1))) \\
    &= \exp\left( \LI(s-1) \right) \exp\left( \GI \displaystyle\sum_{j=0}^{\infty}\left( \phi_\varrho^{(j)}\left( \exp\left(\LG(s-1) \right) \right)-1 \right) \right).
    \end{array}
    \]
    Since the prefactor depends explicitly on $\LI$, and the argument of the iterates $\phi_\varrho^{(j)}$, with $j\geq0$, depends explicitly on $\LG$, the stationary distribution of $X_n$ does not depend only on $\GI$ and $\GG$.  
\end{proof}

\begin{remark}{\em 
The asymmetry in the limiting distributions of $Y_n$ and $X_n$, as observed above, stems from the fundamentally different roles played by thinning and Poisson summation in their recursive definitions. At each step, $Y_n$ is thinned by $p$, and therefore the thinning factor $p$ appears linearly in the stationary probability generating function. This makes the stationary distribution depend only on $\GI$ and $\GG$. On the other hand, $X_n$ is the sum of $Y_{n-1}+1$ Poisson random variables, so the  parameters $\LI$ and $\LG$ enter directly.  This means that the stationary distribution of $X_n$ cannot be expressed in terms of $\GI$ and $\GG$ alone.    }\hfill $\spadesuit$
\end{remark}

\subsection{More age groups}
In the model considered so far, we distinguished between juveniles and a single adult group. The adult population was assumed to be homogeneous with respect to fertility, and immigration was allowed only into the juvenile class. In this section, we show that this framework can be generalized in several natural and important directions.

First, instead of working with only two age classes, we may consider $K$ age groups for any $K \in \mathbb{N}$, and allow immigration to occur in any of these groups. Second, each age group may contribute to the production of juveniles through its own offspring distribution.

The purpose of this section is to demonstrate that the analysis developed in Sections \ref{sec:mod}--\ref{sec:norm} extends to this considerably more general setting. We emphasize, however, that this extension comes at the cost of substantially more involved computations, which we have chosen not to present in detail, as they offer limited additional insight. For example, in the discussion of asymptotic normality in Section \ref{sec:norm}, the associated covariance matrix contains $\tfrac{1}{2}(K+1)(K+2)$ distinct elements, each of which must be computed explicitly. It is worth noting, however, that from a conceptual point of view, these calculations closely mirror those developed in Section $\ref{sec:norm}$.

In this general setup, the branching process under study has $K$ types: the juveniles that are recorded via the process ${\smash \{X_n\}_{n\ge 0}}$, and the $k$-group of adults by ${\smash \{Y^{(k)}_n\}_{n\ge 0}}$, for $k=1,\ldots,K$. Then the dynamics are governed by the following $(K+1)$-dimensional stochastic recursion:
\[
\begin{aligned}
X_{n+1} &= I_n^{(0)} + \sum_{k=1}^K \sum_{i=1}^{Y_n^{(k)}} G^{(k)}_{n,i}, 
&\hspace{1.3cm}
\begin{aligned}
Y_{n+1}^{(1)} &\sim I_n^{(1)} + \mathrm{Bin}(X_n, p_1),\\
Y_{n+1}^{(2)} &\sim I_n^{(2)} + \mathrm{Bin}(Y_n^{(1)}, p_2),\\
&\ \vdots\\
Y_{n+1}^{(K)} &\sim I_n^{(K)} + \mathrm{Bin}(Y_n^{(K-1)}, p_K),
\end{aligned}
\end{aligned}
\]
where the random sequences involved in the model are such that
\[
\begin{aligned}
I_n^{(k)} &\sim \text{i.i.d.\ \,as } I^{(k)}\in{\mathbb N}_0, &\quad & n \ge 0, \quad &k = 0,\dots,K,\\
G_{n,i}^{(k)} &\sim \text{i.i.d.\ \,as } G^{(k)}\in{\mathbb N}_0, &\quad & n,i \ge 1, \quad &k = 1,\dots,K,
\end{aligned}
\]
with all sequences $\{I_n^{(k)}\}_{n \ge 0}$ and $\{G_{n,i}^{(k)}\}_{n \ge 0}$ being mutually independent.

We now explain how to compute the stationary mean of the process, using a calculation that simultaneously reveals the model’s stability condition. It is readily checked that in stationarity one has, with ${\bs I}\equiv (I^{(0)},\ldots, I^{(K)})^\top$ and ${\bs W}_n\equiv (X_n,Y_n^{(1)},\ldots, Y_n^{(K)})$,
\[\E[{\bs W}_{n+1}]= \E[{\bs I}] + D^{(K)} \,\E[{\bs W}_n],\]
where
\begin{equation*}
    D^{(K)}:= \left(\begin{array}{cccccc}
    0&\E[G^{(1)}]&\E[G^{(2)}]&\cdots&\E[G^{(K-1)}]&\E[G^{(K)}]\\
    p_1&0&0&\cdots&0&0\\
    0&p_2&0&\cdots&0&0\\
    \vdots&\vdots&\vdots&\ddots&\vdots&\vdots\\
    0&0&0&\cdots&p_K&0\end{array}\right).
\end{equation*}
It follows that the stationary mean is, provided that spectral radius of $D^{(K)}$ is strictly smaller than $1$,
\[\E[{\bs W}_n]= \left(I-D^{(K)}\right)^{-1}\E[{\bs I}].\]
{Appendix~\ref{ThreeAge} presents a worked example for \(K=2\), including detailed computations of the underlying moment expressions.}

\section{Discussion and concluding remarks}\label{sec:disc}
In this paper, we developed a method-of-moments approach for estimating the parameters of an age-dependent branching process from observations of the total population size alone. In the ergodic regime, we established the asymptotic normality of the resulting estimator and illustrated that the approach extends naturally to several more general settings, including models with multiple age groups. We hope that these results provide a useful basis for statistical inference in age-structured branching models and may serve as a starting point for further developments in more general observation schemes or model classes.

Several directions for future research appear promising. A natural extension is the statistical analysis of sex-structured age-dependent branching processes, where the interaction between age and sex raises new modeling and identifiability challenges. Another interesting direction is to study estimation under more general observation schemes, such as irregular or noisy observations, or in the presence of missing data. It would also be worthwhile to investigate inference beyond the ergodic regime, where the long-term behavior of the process is fundamentally different, and to develop likelihood-based or Bayesian estimation procedures that can be compared, both theoretically and numerically, with the method-of-moments approach considered here.

\appendix

\section{Explicit expressions for stationary moments}

We now illustrate how to compute $\smash{{\mathbb S}[{\bs V}_0\Hh{1}]}$, $\smash{{\mathbb S}[{\bs V}_0\Hh{2}]}$, $\smash{{\mathbb S}[{\bs V}_0\Hh{12}]}$, $\smash{{\mathbb S}[{\bs V}_0\Hh{1}, {\bs V}_0\Hh{2}]}$, $\smash{{\mathbb S}[{\bs V}_0\Hh{1}, {\bs V}_0\Hh{12}]}$, $\smash{{\mathbb S}[{\bs V}_0\Hh{2}, {\bs V}_0\Hh{12}]}$. To derive these quantities, we first define the following auxiliary variables:
\begin{align*}
    & {\bs N}_n\Hh{1}:=(X_n, Y_n)^\top \\
    & {\bs N}_n\Hh{2}:= (X_n^2 , X_nY_n, Y_n^2)^\top \\
    & {\bs N}_n\Hh{3}:= (X_n^3 , X_n^2Y_n, X_nY_n^2, Y_n^3)^\top \\
    & {\bs N}_n\Hh{4}:= (X_n^4 , X_n^3Y_n, X_n^2Y_n^2,  X_nY_n^3, Y_n^4)^\top \,.
\end{align*}
Equation \eqref{X and Y} provides the expression for $\E{\bs N}_n\Hh{1}$. We now show how $\E{\bs N}_n\Hh{2}$ can be computed based on $\E{\bs N}_n\Hh{1}$.

(I) We observe that ${\bs N}_n\Hh{2} = {\bs V}\Hh{2}_n$. As established earlier in part (B), we have
\begin{align*}
    \E\left[{\bs V}\Hh{2}_n\,\Big|\,X_{n-1}=k,Y_{n-1}=\ell\right]&=\left(\begin{array}{c}\E[I^2]+{\mathbb F}[G,I]\,\ell+(\E[G])^2\ell^2\\p\,\E[I]\,k+p\,\E[G]\,k\ell\\p(1-p)\,k+p^2 \,k^2\end{array}\right) \\
    &= {\bs b}_2 + C\LL{2}{1} \left(\begin{array}{c} k \\ \ell\end{array}\right) + C\LL{2}{2} \left(\begin{array}{c} k^2 \\ k\ell \\ \ell^2 \end{array}\right).
\end{align*}
where 
\[
{\bs b}_2 = \left(\begin{array}{c}\E[I^2]\\0\\0\end{array}\right) \qquad
C\LL{2}{1}  = \left(\begin{array}{cc} 0 & {\mathbb F}[G,I]\\ p\,\E[I] &  0 \\ p(1-p) & 0\end{array}\right)  \qquad 
C\LL{2}{2}  = \left(\begin{array}{ccc} 0 & 0 & ({\mathbb E}[G])^2\\ 0 & p\,\E[G] & 0 \\ p^2 & 0 & 0 \end{array}\right) \,.
\]
Thus, as $n\rightarrow \infty$, in stationarity,
\[
\E{\bs N}_n\Hh{2} = {\bs b}_2 + C\LL{2}{1} \E{\bs N}_n\Hh{1} + C\LL{2}{2} \E{\bs N}_n\Hh{2}
\]
Rearranging terms yields
\[
\E{\bs N}_n\Hh{2} = (I - C\LL{2}{2})^{-1} \, ({\bs b}_2 + C\LL{2}{1} \E{\bs N}_n\Hh{1}) \,.
\]

(II) Similarly, we can write
\[
    \E\left[{\bs N}\Hh{3}_n\,\Big|\,X_{n-1}=k,Y_{n-1}=\ell\right] = {\bs b}_3 + C\LL{3}{1}\left(\begin{array}{c} k \\ \ell\end{array}\right)  + C\LL{3}{2} \left(\begin{array}{c} k \\ \ell\end{array}\right) + C\LL{2}{2} \left(\begin{array}{c} k^2 \\ k\ell \\ \ell^2 \end{array}\right) + C\LL{3}{3} \left(\begin{array}{c} k^3 \\ k^2\ell \\ k\ell^2 \\ \ell^3 \end{array}\right) \,,
\]
where 
\begin{align*}
&C\LL{3}{1} =
\left(
\begin{array}{cc}
0 &
3 \, \E[I^2] \, \E[G]
+ 3 \, \E[I] \, {\mathbb V}{\rm ar}[G]
+ \E[G^3]
- 3 \, \E[G] \, \E[G^2]
+ 2 (\E[G])^3
\\
p \, \E[I^2] & 0 \\
p(1-p) \, \E[I] & p(1-p) \, \E[G] \\
p(1-p)(1-2p) & 0
\end{array}
\right), \\
& C\LL{3}{2} =
\left(
\begin{array}{ccc}
0 & 0 &
3 \, \E[I] (\E[G])^2
+ 3 \, \E[G] \,  {\mathbb V}{\rm ar}[G]
\\
0 & p \bigl( 2 \, \E[I] \, \E[G] + \E[G^2] \bigr) & p (\E[G])^2 \\
p^2 \, \E[I] & 0 & 0 \\
3 p^2 (1-p) & 0 & 0
\end{array}
\right), \\
& C\LL{3}{3} =
\left(
\begin{array}{cccc}
0 & 0 & 0 & (\E[G])^3 \\
0 & 0 & 0 & 0 \\
0 & 0 & 0 & 0 \\
p^3 & 0 & 0 & 0
\end{array}
\right), \qquad \qquad
{\bs b}_3 = \left(\begin{array}{c}\E[I^3]\\0\\0\\0\end{array}\right) \,.
\end{align*}
Then, in stationarity, we have
\[
\E{\bs N}_n\Hh{3} = (I - C\LL{3}{3})^{-1} \, ({\bs b}_3 + C\LL{3}{1} \E{\bs N}_n\Hh{1} + C\LL{3}{2} \E{\bs N}_n\Hh{2}) \,.
\]

\section{Branching process with three age groups} \label{ThreeAge}
In this appendix we determine stationary moments for a branching process with three age groups, with arguments similar to those that have been used for two age groups. 
Consider the branching process
\[
X_{n+1}=I_n^{(0)}+\sum_{k=1}^2\sum_{i=1}^{Y_n^{(k)}}G_{k,i},\quad
Y_{n+1}^{(1)}\mid X_n\sim I_n^{(1)}+\mathrm{Bin}(X_n,p_1),\quad
Y_{n+1}^{(2)}\mid Y_n^{(1)}\sim I_n^{(2)}+\mathrm{Bin}(Y_n^{(1)},p_2).
\]
where $I_n^{(k)}\sim\mathrm{Poisson}(\lambda_k)$ for $k=0,1,2,$ and 
$G_{k,i}\sim\mathrm{Poisson}(\nu),$
and all random variables are mutually independent. Write
$
g_1:=\mathbb E[G]=\nu$ and $
g_2:=\mathbb E[G^2]=\nu+\nu^2,$
and define
$
Z_n:=X_n+Y_n^{(1)}+Y_n^{(2)}.$
Taking expectations and solving the stationary balance equations yields
\[
\mu:=\mathbb E[X_n]
=\frac{\lambda_0+g_1[\lambda_1(1+p_2)+\lambda_2]}
       {1-g_1p_1(1+p_2)},
\]
together with
$
\alpha:=\mathbb E[Y_n^{(1)}]=\lambda_1+p_1\mu$ and 
$\beta:=\mathbb E[Y_n^{(2)}]=\lambda_2+p_2\alpha,$
so that
\[
\mathbb E[Z_n]
=\mu+\alpha+\beta
=
\frac{\bigl[\lambda_0+g_1(\lambda_1(1+p_2)+\lambda_2)\bigr]
(1+p_1+p_1p_2)}
{1-g_1p_1(1+p_2)}
+\lambda_1(1+p_2)+\lambda_2.
\]

Next, let
\[
\boldsymbol{v}
:=
\bigl(
\mathbb E[X_n^2],\
\mathbb E[(Y_n^{(1)})^2],\
\mathbb E[(Y_n^{(2)})^2],\
\mathbb E[X_nY_n^{(1)}],\
\mathbb E[X_nY_n^{(2)}],\
\mathbb E[Y_n^{(1)}Y_n^{(2)}]
\bigr)^\top.
\]
Standard conditional moment calculations show that
$
\boldsymbol{v}
=
{C}
+{A}\boldsymbol{v},$
where
\[
{C}=
\begin{pmatrix}
\lambda_0+\lambda_0^2+(2\lambda_0g_1+g_2-g_1^2)(\alpha+\beta)\\[0.5em]
\lambda_1+\lambda_1^2+2p_1\lambda_1\mu+p_1(1-p_1)\mu\\[0.5em]
\lambda_2+\lambda_2^2+2p_2\lambda_2\alpha+p_2(1-p_2)\alpha\\[0.5em]
\lambda_0\lambda_1+\lambda_0p_1\mu+g_1\lambda_1(\alpha+\beta)\\[0.5em]
\lambda_0\lambda_2+\lambda_0p_2\alpha+g_1\lambda_2(\alpha+\beta)\\[0.5em]
\lambda_1\lambda_2+p_2\lambda_1\alpha+p_1\lambda_2\mu
\end{pmatrix},
\qquad
{A}=
\begin{pmatrix}
0 & g_1^2 & g_1^2 & 0 & 0 & 2g_1^2\\
p_1^2 & 0 & 0 & 0 & 0 & 0\\
0 & p_2^2 & 0 & 0 & 0 & 0\\
0 & 0 & 0 & g_1p_1 & g_1p_1 & 0\\
0 & g_1p_2 & 0 & 0 & 0 & g_1p_2\\
0 & 0 & 0 & p_1p_2 & 0 & 0
\end{pmatrix},
\]
so that 
$
\boldsymbol{v}
=({I}-{A})^{-1}{C}.$

The second moment and lag-one cross moment of $Z_n$ are linear functions of
$\boldsymbol{v}$. Indeed,
\[
\mathbb E[Z_n^2]
=
v_1+v_2+v_3+2v_4+2v_5+2v_6.
\]
Moreover, using conditional expectations,
\begin{align*}
\mathbb E[X_nX_{n+1}]
&=\lambda_0\mu+g_1(v_4+v_5),&
\mathbb E[X_nY_{n+1}^{(1)}]
&=\lambda_1\mu+p_1v_1,\\
\mathbb E[X_nY_{n+1}^{(2)}]
&=\lambda_2\mu+p_2v_4,&
\mathbb E[Y_n^{(1)}X_{n+1}]
&=\lambda_0\alpha+g_1(v_2+v_6),\\
\mathbb E[Y_n^{(1)}Y_{n+1}^{(1)}]
&=\lambda_1\alpha+p_1v_4,&
\mathbb E[Y_n^{(1)}Y_{n+1}^{(2)}]
&=\lambda_2\alpha+p_2v_2,\\
\mathbb E[Y_n^{(2)}X_{n+1}]
&=\lambda_0\beta+g_1(v_3+v_6),&
\mathbb E[Y_n^{(2)}Y_{n+1}^{(1)}]
&=\lambda_1\beta+p_1v_5,\\
\mathbb E[Y_n^{(2)}Y_{n+1}^{(2)}]
&=\lambda_2\beta+p_2v_6.
\end{align*}
Summing these terms gives
\[
\mathbb E[Z_nZ_{n+1}]
=
(\lambda_0+\lambda_1+\lambda_2)(\mu+\alpha+\beta)
+g_1(v_2+v_3+v_4+v_5+2v_6)
+p_1(v_1+v_4+v_5)
+p_2(v_2+v_4+v_6).
\]

We evaluate the performance of the proposed estimation procedure through a simulation study. The true parameter values are set to
\[
\lambda_0 = 0.7,\quad
\lambda_1 = 0.2,\quad
\lambda_2 = 0.1,\quad
\nu = 0.8,\quad
p_1 = p_2 = p = 0.4.
\]
We assume that $\lambda_1$ and $\lambda_2$ are known, and estimate the remaining parameters $\lambda_0$, $\nu$, and $p$. The simulation study consists of $1{,}000$ independent replications. In each replication, the system is observed over a time horizon $N = 10^5$, based on which the parameters $\lambda_0$, $\nu$, and $p$ are estimated. Each replication yields one independent realization of the estimator for each parameter.

The Q--Q plots are then constructed using the $1{,}000$ resulting independent estimates for each parameter across replications.
To assess the finite-sample distribution of the estimators, Figure \ref{fig:qq} present normal Q--Q plots of the standardized estimators for $\lambda_0$, $\nu$, and $p$, respectively. In all three cases, the sample quantiles closely track the theoretical normal quantiles, indicating that the empirical distributions of the estimators are well approximated by Gaussian distributions. Of course, alternative specifications are also possible; in particular, we considered the case in which \(p_1\), \(p_2\), and \(\nu\) are unknown and found comparable estimation performance.

\begin{figure}
\centering

\subcaptionbox{$\lambda_0$}
{\includegraphics[width=0.32\linewidth]{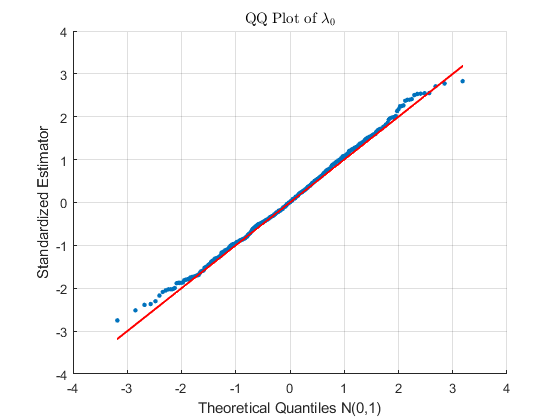}}
\subcaptionbox{$\nu$}
{\includegraphics[width=0.32\linewidth]{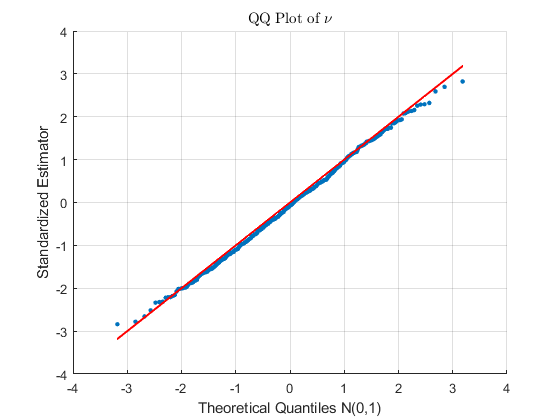}}
\subcaptionbox{$p$}
{\includegraphics[width=0.32\linewidth]{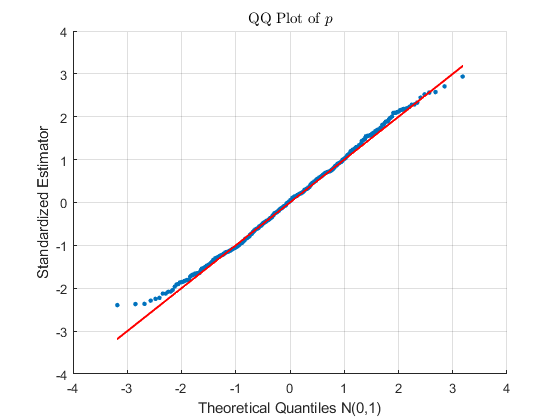}}

\caption{Normal Q--Q plots of the standardized estimators for $\lambda_0$, $\mu$, and $p$. Each estimator is standardized by subtracting its true value and dividing by its empirical standard deviation obtained from 1,000 independent replications.}
\label{fig:qq}
\end{figure}

\end{document}